\documentclass[letter,11pt,reqno,oneside]{amsart}

\usepackage[left=1in,right=1.5in,top=1in,bottom=1in]{geometry}
\usepackage{amsmath,amsfonts,amsthm,mathrsfs,amssymb,bm,cite}
\usepackage{graphicx,epstopdf,booktabs,array}
\usepackage{paralist,verbatim,caption,subcaption}
\usepackage[usenames]{color}

\newtheorem{thm}{Theorem}[section]

\theoremstyle{definition}
\newtheorem{defn}{Definition}[section]
\theoremstyle{remark}

\numberwithin{equation}{section}

\newcommand{\beq}{\begin{equation}}
\newcommand{\eeq}{\end{equation}}


\title[Reconstruction via interior transmission eigenfunctions]{Reconstruction via the intrinsic geometric structures of interior transmission eigenfunctions}

\author{Jingzhi Li}
\address{Department of Mathematics, Southern University of Science and Technology, Shenzhen, China.\vspace*{-4mm}}
\address{\vspace*{-4mm}}
\email{li.jz@sustc.edu.cn}

\author{Xiaofei Li}
\address{Department of Mathematics, Inha University, Incheon, South Korea.\vspace*{-4mm}}
\address{\vspace*{-4mm}}
\email{xiaofeilee@hotmail.com}

\author{Hongyu Liu}
\address{Department of Mathematics, Hong Kong Baptist University, Kowloon Tong, Hong Kong SAR.\vspace*{-4mm}}
\address{\vspace*{-4mm}and}
\address{HKBU Institute of Research and Continuing Education, Virtual University Park, Shenzhen, P. R. China.}
\email{hongyu.liuip@gmail.com}

\date{}

\begin{document}
	
	\maketitle
	
	\begin{abstract}
		We are concerned with the inverse scattering problem of extracting the geometric structures of an unknown/inaccessible inhomogeneous medium by using the corresponding acoustic far-field measurement. Using the intrinsic geometric properties of the so-called interior transmission eigenfunctions, we develop a novel inverse scattering scheme. The proposed method can efficiently capture the cusp singularities of the support of the inhomogeneous medium. If further a priori information is available on the support of the medium, say, it is a convex polyhedron, then one can actually recover its shape. Both theoretical analysis and numerical experiments are provided. Our reconstruction method is new to the literature and opens up a new direction in the study of inverse scattering problems.
	\end{abstract}
	
	\noindent{\footnotesize{\bf Key words}. Inverse medium scattering, non-destructive reconstruction, polyhedral object, interior transmission eigenfunctions}
	
	\noindent{\footnotesize{\bf AMS subject classifications}.  35P25, 78A46, 35R30, 65N25}

	\section{Introduction}
	This work is concerned with the inverse medium scattering problem of extracting the geometric structures of a penetrable scatter by the corresponding acoustic wave detection. Let $D\subset\mathbb{R}^l,l=2,3$, be a bounded Lipschitz domain with a connected complement $D^c:=\mathbb{R}^l\backslash\bar{D}$. Assume that the refractive index of the background space $\mathbb{R}^l\backslash\bar{D}$ is $n(x)=1$, and the refractive index of $D$ is $n(x)=\frac{c_0^2}{c^2(x)}\neq 1$, where $c_0$ is the sound speed in $\mathbb{R}^l\backslash\bar{D}$, and $c$ is the sound speed in $D$. We assume that the refractive index is always positive, and let $q(x):=n(x)-1$.
	
	Given an incident field $u^i$, the presence of the scatter $D$ will give rise to a scattered field $u^s$. We take $u^i(x) = e^{ikx\cdot d}$ to be a time-harmonic plane wave, where $i=\sqrt{-1}$, $d\in\mathbb{R}^{l-1}$ is the incident direction, and  $k\in\mathbb{R}_+$ is the wave number. We define $u(x)=u^i(x)+u^s(x)$ to be the total field, which satisfies the following Helmholtz system
	\begin{equation}
	\begin{cases}
	\Delta u+k^2(1+q)u=0~\quad &~\mbox{in}~ \mathbb{R}^l,\\
	u=u^i+u^s~\quad &~\mbox{in}~ \mathbb{R}^l,\\
	\lim_{r\rightarrow\infty}r^{(l-1)/2}(\frac{\partial u^s}{\partial r}-\mathrm{i}ku^s)=0,~&~r=|x|.
	\end{cases}
	\label{set}
	\end{equation}
	The last limit in \eqref{set} holds uniformly in all directions $\hat{x}=x/|x|\in\mathbb{S}^{l-1}$ and is known as the Sommerfeld radiation condition.
	
	The system \eqref{set} is well understood, and it is known that there exists a unique solution $u\in H^{1}_{loc}(\mathbb{R}^l)$. Moreover, $u$ is (real) analytic and the asymptotic behaviour of the scattered field is governed by $$u^s(x)=\frac{e^{ik|x|}}{|x|^{(l-1)/2}}u_{\infty}(\hat{x})+O\left(\frac{1}{|x|^{(l+1)/2}}\right),~~|x|\rightarrow\infty,$$
	uniformly in all directions $\hat{x}\in\mathbb{R}^{l-1}$. The analytic function $u_{\infty}$ is defined on the unit sphere $\mathbb{S}^{l-1}$ and called the far-field pattern, which is given by
	\begin{equation}
	u_\infty(\hat{x})=\frac{k^2}{4\pi}\int_{\mathbb{R}^l}e^{-ik\hat{x}\cdot y}q(y)u(y)dy,
	\label{far-pattern}
	\end{equation}
	with $\hat{x}\in\mathbb{S}^{l-1}$ known as the observation angle/direction (see \cite{CK}). In what follows, we write $u_{\infty}(\hat{x},d,k)$ to specify its dependence on the observation direction $\hat{x}$, the incident direction, and the wave number $k$. In the current article, we are concerned with a geometrical inverse problem of reconstructing the shape/support of the scatterer, namely $D$, disregarding its content $q$, by knowledge of the measurement of $u_{\infty}(\hat{x},d,k)$. This problem has been playing an indispensable role in many areas of sciences and technology such as radar and sonar, medical imaging, geophysical exploration, and nondestructive testing (see, e.g., \cite{CK}).
	
	A number of numerical reconstruction methods have been developed for the aforementioned inverse scattering problem in various scenarios, including the linear sampling method, factorization method, MUSIC-type methods, time reversal method, single-shot method and topological-optimization-type method; we refer the readers to \cite{AGJKKY,AGJK,AJGKLS,AGKKPS,AIL,AK,AKbook,CC,CCMbook,CKirs,CC2,KG,LLZ,U} and the references therein for these methods and some other related developments. In particular, a type of so-called interior transmission eigenvalue problem arises in the study of linear sampling and factorization methods. These methods succeed only at wave numbers which are not the transmission eigenvalues. The study of the transmission eigenvalue problem is mathematically interesting and challenging since it is a type of non-elliptic and non self-adjoint problem. Recently, the transmission eigenvalue problems were discovered to be useful in inverse medium scattering problem. In \cite{CCM}, a lower bound on the index of refraction is obtained by knowledge of the smallest real transmission eigenvalue corresponding to the medium. In \cite{CCC}, transmission eigenvalues are used to the nondestructive testing of dielectrics. The aforementioned studies as well as the other existing ones, to name a few \cite{CPS,S,SS}, focus on the intrinsic properties of transmission eigenvalues. In the recent papers \cite{BL,BL1,BLLW}, the intrinsic geometric structure of interior transmission eigenfunctions are studied. Specifically, the vanishing and localizing properties of the interior transmission eigenfunctions at cusps on the support of the refractive index are investigated both theoretically and numerically. Finally, we would like to mention in passing that the study of interior transmission eigenvalue problems was also connected to that of invisibility cloaking in the recent papers \cite{JL,LLLW,LWZ}.
	
	Motivated by the aforementioned works on the intrinsic geometric properties of the interior transmission eigenfunctions, we shall develop a novel scheme for the inverse scattering problem of extracting the geometric structures of an unknown and inaccessible inhomogeneous medium by using the corresponding acoustic far-field measurement. The rationale of the proposed method can be briefly described as follows. Consider the following interior transmission eigenvalue problem associated to $(D, q)$,
	\begin{equation}
	\begin{cases}
	\Delta u+k^2(1+q)u=0~\quad &~\mbox{in}~ D,\\
	\Delta v+k^2v=0~\quad &~\mbox{in} ~D,\\
	u-v\in H^2_0(D),
	\end{cases}
	\label{interior}
	\end{equation}
	where it is noted that if $D$ is a Lipschitz domain
	\begin{align*}
	H_0^2(D) = \left\{v \in H^2(D): v =0, ~ \partial_\nu v = 0 \ {\rm on}\ \partial D \right\},
	\end{align*}
	with $\nu$ signifying the unit normal vector directed into the exterior of $D$.
	
	\begin{defn}
		A number $k \in \mathbb{R}_+, k \neq 0$ for which the transmission problem \eqref{interior} has nontrivial solutions $(u,v) \in L^2(D) \times L^2(D)$ such that $u-v \in H_0^2(D)$ is called an interior transmission eigenvalue associated with $(D; q)$. The nontrivial solutions $(u,v)$ are called the corresponding interior transmission eigenfunctions.
	\end{defn}

	The existence, discreteness and infiniteness of transmission eigenvalues for \eqref{interior} can be found \cite{CKP,CPS,CHad,CGH10} and the references therein. However, there are few results concerning the intrinsic properties of the transmission eigenfunctions. The intrinsic geometric structures of interior transmission eigenfunctions were recently investigated in \cite{BL,BL1,BLLW}. Specifically, it is shown that if there are cusp singularities on $D$ in \eqref{interior}, then the interior transmission eigenfunctions would reveal a certain intrinsic quantitative behaviours near the cusps. Here, by a cusp we mean a point where the tangential vector field of the boundary $\partial D$ is discontinuous. Corner singularities are very typical cusp singularities. To be more precise, in \cite{BL,BLLW}, it is shown that if there is a cusp on the support of $q(x)$, then the transmission eigenfunction vanishes near the cusp if its interior angle is less than $\pi$, whereas the transmission eigenfunction localizes near the cusp if its interior angle is bigger than $\pi$. Furthermore, it is shown that the vanishing and blowup orders are inversely proportional to the interior angle of the cusp: the sharper the angle, the higher the convergence order.

	The proposed reconstruction method in this paper is first to make use of the far-field data $u_\infty(\hat x, d, k )$ to determine the interior transmission eigenvalue as well as the corresponding transmission eigenfunctions. We are aware of some existing study by making use of the so-called inside-outside duality to determine the transmission eigenvalues \cite{LR}. But for our need, we shall also need to determine the corresponding eigenfunctions. The determination of transmission eigenvalue and corresponding eigenfunctions is based on the far-field regularization techniques. To our best knowledge, the study in this aspect is also new to the literature. After the determination of the transmission eigenvalue, we seek the Herglotz wave function in a certain domain which is the approximation of the corresponding transmission eigenfunction. The places where the Herglotz wave function is vanishing or localizing are the locations of those cusp singularities of the support of the medium scatterer. If further a priori information is available on the support of the medium, say, it is a convex polyhedron, then one can actually recover its shape by simply joining the cusp singularities by line. If not much a priori information is given, at least we can recover the locations of the cusp singularities. Extensive numerical experiments show that our reconstruction method is efficient and effective. Our study is first of its type in the literature and opens up a new direction in the study of inverse scattering problems.
	
	The rest of the paper is organized as follows. In Section 2, we review some theoretic results on the properties of transmission eigenvalues and eigenfunctions. In Section 3, we focus on the property of the kernel of far-field operator and the Herglotz approximation to the interior transmission eigenfunctions. In Section 4, we present the recovery scheme in detail.  Section 5 is devoted to numerical experiments to validate the applicability and effectiveness of the proposed method. The paper is concluded in Section 6 with some discussion.

	\section{Preliminaries on transmission eigenvalue problem}
	
	In this section, we first collect some theoretical results for the interior transmission eigenvalue problem \eqref{interior}. Throughout this paper, we assume that the refractive index $n\in L^{\infty}(D)$ is real-valued. Denote by
	\begin{align*}
	n_* = \inf_{x \in D} n(x), \quad n^* = \sup_{x \in D} n(x).
	\end{align*}
	The following theorem in \cite{CGH10} gives the existence of interior transmission eigenvalues.
	\begin{thm}
		Let $n\in L^{\infty}(D)$ satisfy either one of the following assumptions:
		\begin{enumerate}
			\item $1+\alpha \leq n_*\leq n \leq n^* < \infty$,
			\item $0 < n_* \leq n\leq n^* \leq 1-\beta$
		\end{enumerate}
		for some constants $\alpha>0$ and $\beta>0$. Then there exists an infinite set of interior transmission eigenvalues with $+\infty$ as the only accumulation point.
	\end{thm}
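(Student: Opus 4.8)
The plan is to convert \eqref{interior} into an equivalent self-adjoint fourth-order eigenvalue problem on $H^2_0(D)$ and then to combine a Courant--Fischer (min--max) argument with analytic Fredholm theory. Writing $\lambda=k^2$ and, in Case (1), $w=u-v\in H^2_0(D)$, one eliminates $u$ through $u=-\frac{1}{\lambda(n-1)}(\Delta w+\lambda w)$ — legitimate since $n-1\ge\alpha>0$ — to obtain the fourth-order equation $(\Delta+\lambda n)\frac{1}{n-1}(\Delta+\lambda)w=0$ in $D$. Testing against $\psi\in H^2_0(D)$, integrating by parts (all boundary terms vanish because $\psi=\partial_\nu\psi=0$ on $\partial D$), and using the identity $\Delta\psi+\lambda n\psi=(\Delta\psi+\lambda\psi)+\lambda(n-1)\psi$ to clear the coefficient $1/(n-1)$, one sees that $k$ is an interior transmission eigenvalue if and only if there is $w\in H^2_0(D)\setminus\{0\}$ with $A_\lambda(w,\psi)=0$ for every $\psi\in H^2_0(D)$, where
\[
A_\lambda(w,\psi):=\int_D\frac{1}{n-1}(\Delta w+\lambda w)\,\overline{(\Delta\psi+\lambda\psi)}\,\diff x-\lambda\int_D\nabla w\cdot\nabla\bar\psi\,\diff x+\lambda^2\int_D w\bar\psi\,\diff x .
\]
This form is Hermitian. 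In Case (2) the same computation, now clearing $1/(1-n)>0$, produces a Hermitian form of the same shape with $1/(1-n)$ in place of $1/(n-1)$ and with $\lambda^2$ playing the role of the spectral parameter; the subsequent argument is identical, so I discuss only Case (1).

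Write $A_\lambda=\mathcal A_\lambda-\lambda\mathcal B$, where $\mathcal B$ is the operator on $H^2_0(D)$ induced by $(w,\psi)\mapsto\int_D\nabla w\cdot\nabla\bar\psi$ and $\mathcal A_\lambda$ is induced by the remaining two terms. Then $\mathcal B$ is self-adjoint, non-negative and injective, and it is compact because it factors through the compact embedding $H^2_0(D)\hookrightarrow H^1_0(D)$; moreover $\lambda\mapsto\mathcal A_\lambda$ is operator-analytic, being polynomial of degree two in $\lambda$. The structural point is that $\mathcal A_\lambda$ is coercive on $H^2_0(D)$ for each $\lambda>0$: from
\[
\mathcal A_\lambda(w,w)=\int_D\frac{1}{n-1}\,|\Delta w+\lambda w|^2\,\diff x+\lambda^2\|w\|_{L^2(D)}^2\ge\frac{1}{n^*-1}\,\|\Delta w+\lambda w\|_{L^2(D)}^2+\lambda^2\|w\|_{L^2(D)}^2 ,
\]
together with $\|\Delta w\|_{L^2(D)}\le\|\Delta w+\lambda w\|_{L^2(D)}+\lambda\|w\|_{L^2(D)}$ and the equivalence of $\|\Delta w\|_{L^2(D)}$ with the $H^2$-norm on $H^2_0(D)$, coercivity follows; here the bounds $0<1/(n^*-1)\le 1/(n-1)\le 1/\alpha$ supplied by the hypotheses are used. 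Consequently, for each fixed $\tau>0$ the generalized eigenvalue problem $\mathcal A_\tau w=\lambda\,\mathcal B w$ has an increasing sequence $0<\lambda_1(\tau)\le\lambda_2(\tau)\le\cdots\to+\infty$ of eigenvalues given by the Courant--Fischer quotients, each $\lambda_j$ is continuous in $\tau$, and $\tau$ is an interior transmission eigenvalue precisely when $\lambda_j(\tau)=\tau$ for some $j$.

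By the abstract existence principle of \cite{CGH10} it then suffices to exhibit, for every $m\in\mathbb N$, numbers $0<\tau_0<\tau_1(m)$ such that $\mathcal A_{\tau_0}-\tau_0\mathcal B$ is positive definite on $H^2_0(D)$ while $\mathcal A_{\tau_1(m)}-\tau_1(m)\mathcal B$ is negative semidefinite on some $m$-dimensional subspace; then $\lambda_j(\tau)=\tau$ has a solution in $(\tau_0,\tau_1(m)]$ for each $j\le m$, yielding at least $m$ interior transmission eigenvalues (counted with multiplicity) below $\tau_1(m)$. The first condition follows at once from the coercivity of $\mathcal A_{\tau_0}$ and the boundedness of $\mathcal B$ once $\tau_0$ is small. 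The second is the crux. Fix a ball $B=B_\rho(x_0)\Subset D$; for $w\in H^2_0(B)$, extended by zero to $D$, the pointwise estimate $1/(n-1)\le 1/(n_*-1)$ shows that $(\mathcal A_\tau-\tau\mathcal B)(w,w)$ is bounded above by the value at $w$ of the corresponding quadratic form for the transmission problem on $B$ with the constant index $n_*$. For the constant-index ball the transmission eigenvalues are computable by separation of variables: they are the zeros of explicit combinations of spherical Bessel functions $j_\ell(k\sqrt{n_*}\,\rho)$, $j_\ell(k\rho)$ and their derivatives, and an asymptotic analysis of these entire transcendental functions shows there are infinitely many, with $+\infty$ as the only accumulation point. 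Choosing $\tau_1(m)$ just above the $m$-th transmission eigenvalue of $(B,n_*)$ and taking the span of the first $m$ corresponding eigenfunctions as the required subspace settles the second condition, and $\tau_1(m)\to+\infty$ as $m\to\infty$.

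Applying this for every $m$ and letting $m\to\infty$ produces an infinite set of interior transmission eigenvalues — the eigenvalues obtained cannot remain in a bounded interval, because $\lambda_m(\tau)\to+\infty$ as $m\to\infty$ uniformly on bounded $\tau$-intervals. Finally, $\lambda\mapsto\mathcal A_\lambda-\lambda\mathcal B$ is an analytic family of Fredholm operators of index zero (a coercive operator plus a compact perturbation) that is invertible at $\lambda=\tau_0$, so by the analytic Fredholm theorem its non-injectivity set — that is, the set of interior transmission eigenvalues — is discrete in $\mathbb R_+$ with no finite accumulation point; hence $+\infty$ is the only accumulation point. I expect the genuinely delicate step to be the verification of the negativity condition on an $m$-dimensional subspace: the monotonicity comparison (in the domain and in the index) with the constant-index ball, and the extraction from it of infinitely many transmission eigenvalues via the Bessel-function characterization, are where the argument is least automatic and where the sign hypotheses on $n$ are used most essentially, whereas the fourth-order reduction, the coercivity estimate, and the discreteness via analytic Fredholm theory are comparatively routine.
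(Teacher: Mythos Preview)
The paper does not prove this theorem; it simply quotes it from \cite{CGH10}. Your proposal is a faithful and essentially correct reconstruction of the Cakoni--Gintides--Haddar argument: the fourth-order reformulation on $H^2_0(D)$, the split $A_\lambda=\mathcal A_\lambda-\lambda\mathcal B$ with $\mathcal A_\lambda$ coercive and $\mathcal B$ compact, the Courant--Fischer curves $\lambda_j(\tau)$, the comparison with a constant-index ball to produce an $m$-dimensional non-positive subspace, and analytic Fredholm theory for discreteness are exactly the ingredients of that paper.

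One small deviation worth noting: in \cite{CGH10} the $m$-dimensional subspace on which $A_{\tau_1(m)}$ is non-positive is built by packing $m$ \emph{disjoint} balls $B_\epsilon^{(1)},\dots,B_\epsilon^{(m)}\Subset D$ and taking on each the eigenfunction $w^{(j)}\in H^2_0(B_\epsilon^{(j)})$ associated to the \emph{first} transmission eigenvalue of $(B_\epsilon,n_*)$; disjoint supports give linear independence for free, and one only needs existence of the first eigenvalue for a constant-index ball plus the scaling $k_1(B_\epsilon,n_*)=k_1(B_1,n_*)/\epsilon$ to send $\tau_1(m)\to\infty$. Your variant uses a single ball and its first $m$ transmission eigenvalues; this also works, but it front-loads the Bessel-function analysis (you must already know the ball has infinitely many transmission eigenvalues and that the associated $w_j$'s span an $m$-dimensional space), whereas the disjoint-ball construction postpones that burden. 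Either route yields the theorem; neither is the paper's own proof, since the paper gives none.
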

	
	Recall from \cite{CGH10} that the lower bound of the first transmission eigenvalue has the following estimation.
	\begin{thm} \label{thm:bounds}
		Let $r$ be the radius of the smallest ball containing $D$. Denote by $k_{1,n_*}$ and $k_{1,n^*}$ the first positive transmission eigenvalue corresponding to the ball of radius $1$ with index of refraction $n_*$ and $n^*$, respectively. Then the first transmission eigenvalue $k_1$ corresponding to $D$ and the given index of refraction $n$ has the following estimations:
		\begin{enumerate}
			\item If $1+\alpha \leq n_*\leq n \leq n^* < \infty$ for some constant $\alpha>0$, then
			\begin{align}\label{esti_geqn}
			k_1 \geq \max \left( \frac{k_{1,n^*}}{r}, \sqrt{\frac{\lambda_1(D)}{n^*}} \right);
			\end{align}
			\item If $0 < n_*\leq n \leq n^* < 1-\beta$ for some constant $\beta>0$, then
			\begin{align}\label{esti_leqn}
			k_1 \geq \max \left( \frac{k_{1,n_*}}{r}, \sqrt{\lambda_1(D)} \right),
			\end{align}
		\end{enumerate}
		where $\lambda_1(D)$ is the first Dirichlet eigenvalue for $-\Delta$ in $D$.
	\end{thm}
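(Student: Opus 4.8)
The plan is to pass to the equivalent fourth-order weak formulation of \eqref{interior} and then argue in two parts: an eigenfunction-expansion estimate that yields the Dirichlet-eigenvalue bounds in \eqref{esti_geqn}--\eqref{esti_leqn}, and a monotonicity principle that yields the ball-comparison bounds.

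First I would recast \eqref{interior}. Let $(u,v)$ be a transmission eigenpair at $k$, set $\tau:=k^2$ and $w:=u-v\in H^2_0(D)$; note that $1/(n-1)\in L^\infty(D)$ by the hypotheses. From $\Delta u+\tau n u=0$ and $\Delta v+\tau v=0$ in $D$ one computes (as identities in $L^2(D)$) $\Delta w+\tau w=-\tau(n-1)u$ and $\Delta w+\tau n w=-\tau(n-1)v$, so in particular $v=-\tfrac{1}{\tau(n-1)}(\Delta w+\tau n w)\in L^2(D)$. Testing $\Delta v+\tau v=0$ against $\phi\in H^2_0(D)$ and integrating by parts twice (the boundary terms vanish since $\phi=\partial_\nu\phi=0$ on $\partial D$), then substituting this expression for $v$, gives
\begin{equation}\label{eq:te-weak}
\int_D\frac{1}{n-1}\,(\Delta w+\tau n w)\,\overline{(\Delta\phi+\tau\phi)}\;dx=0\qquad\text{for all }\phi\in H^2_0(D),
\end{equation}
and, taking $\phi=w$ and using $\int_D w\,\overline{(\Delta w+\tau w)}\,dx=-\|\nabla w\|^2_{L^2(D)}+\tau\|w\|^2_{L^2(D)}$,
\begin{equation}\label{eq:te-scalar}
\int_D\frac{1}{n-1}\,|\Delta w+\tau w|^2\;dx-\tau\int_D|\nabla w|^2\;dx+\tau^2\int_D|w|^2\;dx=0 .
\end{equation}

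For the bounds involving $\lambda_1(D)$, I would expand $w=\sum_j c_j\phi_j$ in the $L^2(D)$-orthonormal basis $\{\phi_j\}$ of Dirichlet eigenfunctions of $-\Delta$ on $D$, with eigenvalues $0<\lambda_1(D)\le\lambda_2(D)\le\cdots$; this is legitimate because $w\in H^2_0(D)$ lies in the domain of the Dirichlet Laplacian, and it gives $\|\nabla w\|^2=\sum_j\lambda_j(D)|c_j|^2$ and $\|\Delta w+\tau w\|^2=\sum_j(\tau-\lambda_j(D))^2|c_j|^2$, with the $c_j$ not all zero. If $0<n\le n^*<1-\beta$, then $1/(n-1)<0$, so \eqref{eq:te-scalar} forces $\|\nabla w\|^2\le\tau\|w\|^2$, which with $\|\nabla w\|^2\ge\lambda_1(D)\|w\|^2$ gives $\tau\ge\lambda_1(D)$. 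If $n\ge1+\alpha$, then $1/(n-1)\ge 1/(n^*-1)>0$, and substituting the expansion into \eqref{eq:te-scalar} and simplifying yields $\sum_j(\lambda_j(D)-\tau)(n^*\tau-\lambda_j(D))\,|c_j|^2\ge 0$; were $\tau<\lambda_1(D)/n^*$, then (using $n^*>1$) every factor $\lambda_j(D)-\tau$ would be positive and every factor $n^*\tau-\lambda_j(D)$ negative, making the sum strictly negative---a contradiction. Hence $\tau\ge\lambda_1(D)/n^*$.

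For the ball-comparison bounds, I would use the min-max characterization of $k_1$. After translating $D$ so that $D\subset B_r$, the origin-centred ball of radius $r$, write $k_1^2$ as the least $\tau>0$ at which the continuous curve $\tau\mapsto\mu_1(\tau;D,n)$ meets the diagonal, where
\begin{equation*}
\mu_1(\tau;D,n)=\min_{0\neq w\in H^2_0(D)}\frac{\mathcal{A}_\tau(w,w)}{\|\nabla w\|^2_{L^2(D)}},
\end{equation*}
and $\mathcal{A}_\tau$ is the $H^2_0(D)$-coercive Hermitian form attached to \eqref{eq:te-weak} (for $n>1$ one has $\mathcal{A}_\tau(w,w)=\int_D\frac{1}{n-1}|\Delta w+\tau w|^2\,dx+\tau^2\|w\|^2_{L^2(D)}$, with the analogous form built from $1/(1-n)$ when $n<1$); this characterization, together with $\mu_1(\tau;D,n)>\tau$ for $\tau\in(0,k_1^2)$, follows from the variational framework and the existence theorem above. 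Two monotonicity facts then conclude the proof: enlarging the domain decreases $\mu_1$ pointwise in $\tau$, since $H^2_0(D)\subset H^2_0(B_r)$ by extension by zero whenever $D\subset B_r$; and, at a fixed constant index, $\mu_1$ is monotone in the index---decreasing in $n$ for $n>1$, decreasing as $n\downarrow0$ for $n<1$---which, for $n>1$, is read off the Rayleigh quotient, the index entering $\mathcal{A}_\tau$ only through the coefficient $1/(n-1)$. Composing these with $D\subset B_r$ and $n_*\le n\le n^*$ gives $\mu_1(\tau;D,n)\ge\mu_1(\tau;D,n^*)\ge\mu_1(\tau;B_r,n^*)$ when $n>1$, and the corresponding chain ending in $\mu_1(\tau;B_r,n_*)$ when $n<1$; since the diagonal is fixed, the pointwise-smaller curve meets it no later, whence $k_1^2\ge k_1(B_r,n^*)^2$ (resp.\ $k_1^2\ge k_1(B_r,n_*)^2$). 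Finally, the scaling identity $k_1(B_r,c)=k_1(B_1,c)/r=k_{1,c}/r$ for a constant index $c$ turns these into $k_1\ge k_{1,n^*}/r$ and $k_1\ge k_{1,n_*}/r$, respectively.

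The main obstacle is this monotonicity principle: one must verify that $\mathcal{A}_\tau$ is coercive on $H^2_0(D)$ uniformly on compact $\tau$-intervals, that $\tau\mapsto\mu_1(\tau;D,n)$ is continuous and admits the Courant--Fischer description used above, and---most delicately---that replacing the variable index $n(x)$ by a constant bound moves $\mu_1$ in the favourable direction. This is immediate when $n>1$; when $n<1$, however, the index also appears inside the leading term $\Delta w+\tau n w$ of the relevant coercive form, so one must first recast the comparison with the constant $n_*$ into a manifestly monotone shape---equivalently, make the correct choice of auxiliary field in the reformulation. Carrying this out is the crux of the ball-comparison bound in case~(2).
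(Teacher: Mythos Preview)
The paper does not prove this theorem; it is stated in Section~2 as a result recalled from \cite{CGH10} (Cakoni--Gintides--Haddar), with no argument given. There is therefore no ``paper's own proof'' to compare against.

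That said, your proposal is a faithful outline of the proof in \cite{CGH10}. The reduction to the fourth-order identity \eqref{eq:te-scalar} for $w=u-v\in H^2_0(D)$, the Dirichlet-eigenfunction expansion yielding the $\lambda_1(D)$ bounds, and the variational min--max characterisation combined with domain monotonicity (via $H^2_0(D)\hookrightarrow H^2_0(B_r)$ by zero extension) and index monotonicity for the ball-comparison bounds are exactly the ingredients used there. Your derivation of $\sum_j(\lambda_j-\tau)(n^*\tau-\lambda_j)|c_j|^2\ge 0$ in case~(1) and the ensuing contradiction is correct. You also correctly flag the genuine obstacle in case~(2): for $n<1$ the natural coercive form is built from $\frac{n}{1-n}|\Delta w+\tau w|^2$ (equivalently, one tests with $\Delta\phi+\tau n\phi$ rather than $\Delta\phi+\tau\phi$), and the index now sits both as a coefficient and inside the principal term, so monotonicity in $n$ is not immediate from the Rayleigh quotient. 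In \cite{CGH10} this is resolved by comparing not the full quotient but the auxiliary function $\tau\mapsto\lambda_1(\tau)-\tau$ whose first zero is $k_1^2$, and showing directly that replacing $n$ by the constant $n_*$ and $D$ by $B_r$ pushes this zero to the left; once you carry out that step, your sketch is complete.
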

	
	The numerical experiments in \cite{BLLW} indicate the existence of complex transmission eigenvalues, but this has not been established theoretically in general. The following theorem shows the non-existence of purely imaginary transmission eigenvalues \cite{CMS10}.
	\begin{thm}
		If $n>1$ or $n<1$ almost everywhere in $D$, then there exists no purely imaginary transmission eigenvalues.
	\end{thm}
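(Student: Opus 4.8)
The plan is to convert the transmission system into an equivalent fourth-order variational identity on the single function $w := u - v \in H^2_0(D)$ and then read off a contradiction from the sign structure that emerges when $k^2$ is a negative real number. Subtracting the two Helmholtz equations in \eqref{interior} and substituting $u = v + w$ yields, as identities in $L^2(D)$,
\begin{equation*}
\Delta w + k^2 w = -k^2 (n-1) u, \qquad \Delta w + k^2 n w = -k^2(n-1) v .
\end{equation*}
Since $n - 1 \ne 0$ almost everywhere under either hypothesis, the first relation lets me define $\phi := -k^2 u = (n-1)^{-1}(\Delta w + k^2 w)$, which lies in $L^2(D)$ precisely because $\phi = -k^2 u$; interior elliptic regularity applied to $\Delta u = -k^2 n u \in L^2$ gives $u \in H^2_{loc}(D)$, so in fact $\phi \in H^2_{loc}(D)$ and $(\Delta + k^2 n)\phi = 0$. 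Symmetrically, $\psi := -k^2 v = (n-1)^{-1}(\Delta w + k^2 n w) \in H^2_{loc}(D)$ solves $(\Delta + k^2)\psi = 0$.

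Next I would test $(\Delta + k^2 n)\phi = 0$ against $\bar w$ over $D$. Approximating $w$ in $H^2(D)$ by functions in $C_c^\infty(D)$ --- legitimate because $w \in H^2_0(D)$ --- the second Green identity produces $\int_D \phi\, \Delta\bar w\, dx + k^2 \int_D n\,\phi\,\bar w\, dx = 0$ with all boundary terms vanishing, precisely because $w = \partial_\nu w = 0$ on $\partial D$. At this point I specialize to a purely imaginary eigenvalue, $k = \mathrm{i}\tau$ with $\tau \in \mathbb{R}\setminus\{0\}$, so that $k^2 = -\tau^2$ is real; the conjugations then become harmless, and substituting $(n-1)\phi = \Delta w + k^2 w$ together with $\int_D (\Delta w + k^2 w)\bar w\, dx = -\|\nabla w\|_{L^2(D)}^2 + k^2 \|w\|_{L^2(D)}^2$ collapses the identity to
\begin{equation*}
\int_D (n-1)\,|\phi|^2\, dx + \tau^2 \int_D |\nabla w|^2\, dx + \tau^4 \int_D |w|^2\, dx = 0 .
\end{equation*}
If $n > 1$ a.e., every term is nonnegative, hence $w \equiv 0$; then $u = v$, and $\Delta w + k^2 w = -k^2(n-1)u$ forces $(n-1)u = 0$ a.e., so $u = v = 0$, contradicting the nontriviality of the eigenpair.

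For the remaining case $n < 1$ a.e. I would repeat the argument verbatim with $\psi$ in place of $\phi$, testing $(\Delta + k^2)\psi = 0$ against $\bar w$; the analogous computation yields
\begin{equation*}
\int_D (n-1)\,|\psi|^2\, dx - \tau^2 \int_D |\nabla w|^2\, dx - \tau^4 \int_D n\, |w|^2\, dx = 0 .
\end{equation*}
Now $n - 1 < 0$ makes the first integral nonpositive, and since the refractive index is positive the last integral satisfies $-\tau^4 \int_D n|w|^2\,dx \le 0$ as well; all three terms being nonpositive with zero sum forces $w \equiv 0$, whence $v = 0$ and $u = v + w = 0$ --- again a contradiction. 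The one place that deserves care is the derivation of the fourth-order identity: one needs the auxiliary function to genuinely lie in $L^2(D)$, which is why it matters to notice that it equals $-k^2 u$ (resp.\ $-k^2 v$) rather than relying on $(n-1)^{-1} \in L^\infty$, and one needs the Green-identity boundary terms to drop, which is exactly where $w \in H^2_0(D)$ enters. After that the argument is purely a matter of sign bookkeeping, and it is decisive that $k$ being purely imaginary makes $k^2 < 0$ and $k^4 > 0$.
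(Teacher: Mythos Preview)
Your argument is correct. The paper does not supply its own proof of this theorem; it is quoted verbatim from \cite{CMS10} as a background result, so there is nothing in the paper to compare against. What you have written is essentially the standard fourth-order variational argument from that reference: reduce to $w=u-v\in H^2_0(D)$, observe that $(\Delta+k^2)(n-1)^{-1}(\Delta+k^2 n)w=0$ (equivalently your $\phi,\psi$ formulation), pair against $\bar w$, and exploit that $k^2<0$ fixes the signs of all three terms. The only delicate points---that the auxiliary functions $\phi=-k^2u$, $\psi=-k^2v$ lie in $L^2(D)$ without needing $(n-1)^{-1}\in L^\infty$, and that $w\in H^2_0(D)$ kills the boundary contributions---you have identified and handled correctly. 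One minor remark: the detour through $H^2_{\mathrm{loc}}$ interior regularity is unnecessary, since $\phi\in L^2$ and $\Delta\phi=-k^2 n\phi\in L^2$ already suffice to justify the pairing with $\bar w\in H^2_0(D)$ by density.
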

	
	Next, we give a more definite description of the vanishing and localizing of transmission eigenfunctions from \cite{BL,BLLW}.
	
	\begin{defn}
		Assume that $k\in\mathbb{R}_+$ is a transmission eigenvalue, then there exist $u,v\in L^2(D)$ such that
		\begin{equation*}
		\begin{cases}
		& \Delta u + k^2 (1+q) u = 0  \quad\, {\rm in}\ D,  \\
		& \Delta v + k^2 v = 0  \quad {\rm in}\ D,  \\
		& u-v\in H^2_0(D),~\|v\|_{L^2(D)}=1.
		\end{cases}
		\end{equation*}
		Let $P\in\partial D$ be a point and $B_r(P)$ be a ball of radius $r\in\mathbb{R}_+$ centered at $P$. Set $D_r(P):=B_r(P)\cap D$. Assume that
		\begin{equation*}
		\|q\|_{L^\infty(D_r(p))}\geq \epsilon_0, \quad\epsilon_0\in\mathbb{R}_+.
		\end{equation*}
		Then we say that vanishing occurs near $P$ if
		$$\lim_{r\rightarrow +0}\frac{1}{\sqrt{|D_r(P)|}}\|v(x)\|_{L^2(D_r(P))}=0,$$
		whereas we say that localizing occurs near $P$ if
		$$\lim_{r\rightarrow +0}\frac{1}{\sqrt{|D_r(P)|}}\|v(x)\|_{L^2(D_r(P))}=+\infty,$$
		where $|D_r(P)|$ signifies the area or volume of the region $D_r(P)$, respectively, in two and three dimensions.
	\end{defn}
	
	The vanishing and localizing property of transmission eigenfunctions near a corner point comes from the corner scattering study in \cite{BL1,BPS,PSV,HSV,EH1,EH2}. If $P$ is the vertex of a corner with an interior angle less than $\pi$, the vanishing of the transmission eigenfunctions has been rigorously verified in \cite{BL}. An important consequence of the study in \cite{BL1,BPS,PSV,EH1} is the fact that the transmission eigenfunction $v$ cannot be analytically extended across a corner point to form an entire solution to the Helmholtz equation, $\Delta v+k^2 v=0$ in $\mathbb{R}^l$. However, we note that due to the interior regularity, the transmission eigenfunction $v$ is always analytic away from the corner point. Hence, heuristically, the failure of the analytic extension may indicate that $v$ either vanishes or blows up when approaching the corner point. Clearly, the failure of the analytical extension should also hold across any irregular point on the support of $q$, and hence in \cite{BLLW}, it is numerically shown that the vanishing or localizing behaviours of the transmission eigenfunctions would occur near any cusps on the support of the underlying. Theoretical proof of such a conjecture is fraught with significant difficulties. Indeed, the proof in \cite{BL} of the vanishing of the transmission eigenfunction in the special case near a corner with an interior angle less than $\pi$ already involves much technical analysis and advanced tools. In \cite{BLLW}, the order of vanishing and localizing convergence rate has been investigate numerically, and show that it is related to the angle of the corner. In the three dimensional case, it turns out that edges also posses the vanishing phenomena.
	
	\section{Property of the kernel of far-field operator and the Herglotz approximation to the interior transmission eigenfunction}
	In this paper, we are concerned with the inverse medium problem of determining the support of unknown domain $D$ with cusp singularities based on the vanishing and localizing property of transmission eigenfunctions. Consider the scattering problem \eqref{set}. We first introduce the far-field operator $F_k$: $L^2(\mathbb{S}^{l-1})\rightarrow L^2(\mathbb{S}^{l-1})$ defined by
	$$F_k(g)(\hat{x}):=\int_{\mathbb{S}^{l-1}}u^{\infty}(\hat{x},d,k)g(d)ds(d), ~\hat{x}\in\mathbb{S}^{l-1}.$$
	The far field operator $F_k$ is injective and has dense range if and only if there does not exist a Dirichlet eigenfunction for $D$ which is a Herglotz wave function (see, for example, \cite{CK}).
	\begin{defn}
		A Herglotz wave function is a function of the form
		\begin{equation}
		\label{Herg}
		H_k(g)(x)=\int_{\mathbb{S}^{l-1}}e^{ikx\cdot d}g(d)ds(d),~~x\in\mathbb{R}^l,
		\end{equation}
		where $g\in L^2(\mathbb{S}^{l-1})$. The function $g$ is called the Herglotz kernel of $H_k(g)$.
	\end{defn}
	We recall from \cite{Weck} the following result concerning the Herglotz approximation .
	\begin{thm}\label{denseness}
		Let $\boldsymbol{W}_k$ denote the space of all Herglotz wave functions of the form \eqref{Herg}. Define, respectively,
		$$W_k(D):=\{u\in C^{\infty}(D):(\Delta+k^2)u=0\},$$
		and
		$$\boldsymbol{W}_k(D):=\{u|_{D}:u\in\boldsymbol{W}_k\}.$$
		Then $\boldsymbol{W}_k(D)$ is dense in $W_k(D)\cap H^1(D)$ with respect to the topology induced by the $H^1$-norm.
	\end{thm}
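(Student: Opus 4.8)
\emph{Proof strategy.} The plan is to argue by duality. Since $W_k(D)\cap H^1(D)$ carries the $H^1(D)$-topology, by the Hahn--Banach theorem it suffices to show that every bounded linear functional $\ell$ on $H^1(D)$ which annihilates $\boldsymbol{W}_k(D)$ also annihilates all of $W_k(D)\cap H^1(D)$. Such an $\ell$ may be written as $\ell(\phi)=\int_D\big(f_0\,\phi+\mathbf{f}\cdot\nabla\phi\big)\,dx$ with $f_0\in L^2(D)$ and $\mathbf{f}\in L^2(D)^l$. First I would test $\ell$ against the Herglotz wave functions $\phi=H_k(g)|_D$, $g\in L^2(\mathbb{S}^{l-1})$: interchanging the order of integration, the condition $\ell(H_k(g))=0$ reads $\int_{\mathbb{S}^{l-1}}g(d)\,\Psi(d)\,ds(d)=0$, where $\Psi(d):=\int_D\big(f_0(y)+ik\,d\cdot\mathbf{f}(y)\big)e^{iky\cdot d}\,dy$, and since $g$ is arbitrary and $\Psi$ is continuous on $\mathbb{S}^{l-1}$, this forces $\Psi\equiv 0$.

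Next I would recognise $\Psi$, evaluated at $-\hat{x}$, as the far-field pattern (up to a fixed nonzero constant) of the volume potential $P(x):=\int_D\Phi_k(x,y)f_0(y)\,dy+\int_D\nabla_y\Phi_k(x,y)\cdot\mathbf{f}(y)\,dy$, where $\Phi_k$ denotes the outgoing fundamental solution of $\Delta+k^2$ in $\mathbb{R}^l$. Using $\nabla_y\Phi_k=-\nabla_x\Phi_k$ and the fact that volume potentials of $L^2$-densities lie in $H^2_{\mathrm{loc}}(\mathbb{R}^l)$, one checks that $P\in H^1_{\mathrm{loc}}(\mathbb{R}^l)$, that $P$ is a radiating solution of $(\Delta+k^2)P=0$ in $\mathbb{R}^l\setminus\overline{D}$, and that its far-field pattern is $\hat{x}\mapsto c\,\Psi(-\hat{x})\equiv 0$. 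Rellich's lemma then gives $P\equiv 0$ outside a large ball, and since $\mathbb{R}^l\setminus\overline{D}$ is connected, unique continuation for the Helmholtz equation propagates this to $P\equiv 0$ in all of $\mathbb{R}^l\setminus\overline{D}$.

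The last step is to feed $P$ back into $\ell$ and conclude $\ell(u)=0$ for every $u\in W_k(D)\cap H^1(D)$. From $P\in H^1_{\mathrm{loc}}(\mathbb{R}^l)$ vanishing outside $\overline{D}$ I get $P|_D\in H^1(D)$ with zero Dirichlet trace on $\partial D$, while a distributional computation with the fundamental-solution identity gives $(\Delta+k^2)P=-f_0+\nabla\cdot\mathbf{f}$ on $\mathbb{R}^l$ (with $f_0,\mathbf{f}$ extended by zero). The difficulty is that the source $\nabla\cdot\mathbf{f}$ lies only in $H^{-1}$, so $P$ need not belong to $H^2(D)$ and one cannot simply integrate by parts against a test function that is not in $H^1_0(D)$. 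I would get around this by passing to the vector field $\mathbf{G}:=\nabla P-\mathbf{f}$: one has $\nabla\cdot\mathbf{G}=\Delta P-\nabla\cdot\mathbf{f}=-k^2P-f_0\in L^2_{\mathrm{loc}}(\mathbb{R}^l)$, hence $\mathbf{G}\in H_{\mathrm{loc}}(\mathrm{div};\mathbb{R}^l)$ and its normal trace on $\partial D$ is well defined and matches from both sides; as $\mathbf{G}$ vanishes outside $\overline{D}$ we obtain $(\nabla P-\mathbf{f})\cdot\nu=0$ on $\partial D$. Finally, for $u\in W_k(D)\cap H^1(D)$ one has $\Delta u=-k^2u\in L^2(D)$, so $\nabla u\in H(\mathrm{div};D)$; the $H(\mathrm{div})$ Green's formula applied to $(\nabla u,P)$, using $P|_{\partial D}=0$, gives $\int_D\nabla P\cdot\nabla u\,dx=k^2\int_D Pu\,dx$, and applied to $(\mathbf{G},u)$, using $(\nabla P-\mathbf{f})\cdot\nu=0$, gives $\int_D(\nabla P-\mathbf{f})\cdot\nabla u\,dx=\int_D(k^2P+f_0)u\,dx$. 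Subtracting these two identities leaves $\int_D(f_0\,u+\mathbf{f}\cdot\nabla u)\,dx=0$, i.e.\ $\ell(u)=0$, which is the desired contradiction.

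I expect the third paragraph to be the crux. The genuine first-order term $\mathbf{f}\cdot\nabla\phi$ in the dual representation of $\ell$ produces a potential $P$ with only an $H^{-1}$ source, which blocks a naive integration by parts; handling it rigorously requires the substitution $\mathbf{G}=\nabla P-\mathbf{f}$ together with the $H(\mathrm{div})$ normal-trace theory and Green's formula valid on Lipschitz domains. The remaining ingredients — the $H^2_{\mathrm{loc}}$ smoothing of volume potentials, the far-field asymptotics of $\Phi_k$ and $\nabla_y\Phi_k$, Rellich's lemma, and unique continuation on the connected set $\mathbb{R}^l\setminus\overline{D}$ — are all classical and can be invoked directly.
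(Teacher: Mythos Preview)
Your argument is correct and complete. Note, however, that the paper does not give a proof of this theorem at all: it is simply quoted from \cite{Weck} (``We recall from \cite{Weck} the following result concerning the Herglotz approximation''), so there is no in-paper proof to compare against. What you have written is in fact a self-contained proof of Weck's theorem, and it follows the classical duality route: Hahn--Banach reduces the question to annihilating functionals, the vanishing far-field condition is transferred via Rellich and unique continuation into $P\equiv 0$ in the exterior, and the crucial final identity $\ell(u)=0$ is recovered through the $H(\mathrm{div})$ Green formula. Your observation that the $\mathbf{f}\cdot\nabla\phi$ term forces one below $H^2$ regularity, and that the fix is to work with $\mathbf{G}=\nabla P-\mathbf{f}\in H_{\mathrm{loc}}(\mathrm{div};\mathbb{R}^l)$ and its well-defined normal trace, is exactly the right technical point; without it the integration by parts would be formal. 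This is essentially the argument in Weck's original paper, so your proposal both fills in what the paper omits and does so along the expected lines.
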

	
	If $k$ happens to be the interior transmission eigenvalue to the following interior transmission problem
	\begin{equation}
	\begin{cases}
	& \Delta u + k^2 (1+q) u = 0  \quad\, {\rm in}\ D,  \\
	& \Delta v + k^2 v = 0  \quad {\rm in}\ D,  \\
	& u-v\in H^2_0(D).
	\end{cases}
	\end{equation}
	By the denseness property of Herglotz wave functions, see Theorem \ref{denseness}, there exists Herglotz wave function
	$$v_g=\int_{\mathbb{S}^{l-1}}e^{ikx\cdot d}g(d)ds(d),$$
	where the kernel $g\in L^2(\mathbb{S}^{l-1})$, such that for any sufficiently small $\epsilon\in \mathbb{R}^+$, there is
	\begin{equation}\label{Herg_vg}
	\|v_g-v\|_{H^1(D)}\leq \epsilon.
	\end{equation}
	We say that $v_g$ is normalized if $\|g\|_{L^2(\mathbb{S}^{l-1})}=1$. Then $v$ extends to the whole $\mathbb{R}^l$ as the Herglotz wave function $v_g$. It is shown that transmission eigenfunctions cannot be extended analytically to a neighbourhood of a corner (see, for example, \cite{BPS}). Therefore, the transmission eigenfunctions must vanish near the corner point.
	
	In this paper, we aim at finding the support of $q$ when it is supported in a polyhedral domain. If domain $D$ is not of polyhedral shape but possesses cusp point, we aim at locating cusp singularities. The main idea is to find the vanishing and localizing points of the Herglotz wave extension function of transmission eigenfunctions. To do so, we first need to retrieve the transmission eigenvalue from a knowledge of far-field pattern $u_{\infty}$.
	
	Consider the scattering problem \eqref{set}. Let $u^i(x)=e^{ikx\cdot d}$ be the simple incident plane wave, where $d\in\mathbb{R}^{l-1}$ is the incident direction. The far field pattern $u^{\infty}(\hat{x},k,d)$ of the scattered field $u^s$ can be collected at circular boundary which enclose $D$.
	
	By superposition we note that
	\begin{equation}\label{far}
	F_k(g)(\hat{x})=\int_{\mathbb{S}^{N-1}}u^{\infty}(\hat{x},d,k)g(d)ds(d),~~\hat{x}\in\mathbb{S}^{N-1}
	\end{equation}
	is the far-field pattern of the scattered field
	$$v^s(x)=\int_{\mathbb{S}^{N-1}}u^s(x,d)g(d)ds(d)$$
	corresponding to the incident field $v_g$ (see, for example, \cite{CKP}).  For $g\in L^2(\mathbb{S}^{l-1})$, it has the Fourier expansion
	\begin{equation}\label{g}
	g(x)=\sum_{n=0}^{\infty}\sum_{m=-n}^{n}a^m_nY^m_n(\hat{x}),
	\end{equation}
	where $Y^m_n$ denotes a normalized spherical harmonic, and the series is uniformly and absolutely convergent. The coefficients $a^m_n$ are given by
	$$a_n^m=\int_{\mathbb{S}^{l-1}}g(d)\overline{Y^m_n(d)}ds(d).$$
	
	By \eqref{far-pattern} and the reciprocity relation $u_{\infty}(\hat{x},d)=u_{\infty}(-d,-\hat{x})$, see, for example \cite{CK}, we have
	\begin{equation}
	\begin{split}
	F_k(g)(\hat{x})&=\int_{\mathbb{S}^{l-1}}u^{\infty}(-d,-\hat{x})g(d)ds(d)\\
	&=\frac{k^2}{4\pi}\int_{\mathbb{S}^{l-1}}\int_{B}e^{ikd\cdot y}q(y)u(y,-\hat{x})dyg(d)ds(d),
	\end{split}
	\label{Fk}
	\end{equation}
	where $B\in\mathbb{R}^{l}$ is a bounded open ball with radius $R$ enclosing scatter $D$. Inserting \eqref{g} into \eqref{Fk}, then \eqref{Fk} becomes
	\begin{equation}
	\begin{split}
	F_k(g)(\hat{x})&=\frac{k^2}{4\pi}\int_{\mathbb{S}^{l-1}}\int_{B}e^{ikd\cdot y}q(y)u(y,-\hat{x})dy\sum_{n=0}^{\infty}\sum_{m=-n}^{n}a^m_nY^m_n(d)ds(d)\\
	&=\frac{k^2}{4\pi}\int_{B}q(y)u(y,-\hat{x})\sum_{n=0}^{\infty}\sum_{m=-n}^{n}a^m_n\int_{\mathbb{S}^{l-1}}e^{ikd\cdot y}Y^m_n(d)ds(d)dy.
	\end{split}
	\label{Fk1}
	\end{equation}
	By the Funk-Hecke formula (see for example \cite{CK}), \eqref{Fk1} becomes
	\begin{equation}
	F_k(g)(\hat{x})=\frac{k^2}{4\pi}\int_{B}q(y)u(y,-\hat{x})\sum_{n=0}^{\infty}\sum_{m=-n}^{n}a^m_n4\pi i^n j_n(k|y|)Y^m_n(y/|y|)dy,
	\label{Fk2}
	\end{equation}
	where $j_n$ denotes a spherical Bessel function.
	
	With the aid of Stirling's formula $n!=\sqrt{2\pi n}(n/e)^n(1+o(1)),n\rightarrow\infty$, we obtain
	\begin{equation}\label{jnkr}
	j_n(kr)=O\left(\frac{ekr}{2n}\right)^{n},~n\rightarrow \infty
	\end{equation}
	uniformly on $B$. Then we have for any $k\in \mathbb{R}_+$, there holds
	$$F_k(g)(\hat{x})\approx 0$$
	for sufficiently large $N$, by letting $a^m_n=0, n=1,2,\dots,N$, $a^m_n\neq 0, n=N+1,\dots$.
	
	In order to distinguish the far-field operator between two cases that $k$ is a transmission eigenvalue and $k$ is not a transmission eigenvalue, we rigorously prove the following three crucial theorems in detail.
	
	Define
	\begin{equation}\label{gn}
	g_N:=\sum_{n=0}^{N}\sum_{m=-n}^{n}a^m_nY^m_n(\hat{x}),
	\end{equation}
	and
	$$F_{k,N}(g):=\int_{\mathbb{S}^{l-1}}u^{\infty}(\hat{x},d,k)g_N(d)ds(d).$$
	We prove the following theorem.
	\begin{thm}\label{F-Fsmall}	
		If $N$ is sufficiently large, then the following holds
		$$\|F_k(g)-F_{k,N}(g)\|_{L^2(\mathbb{S}^{l-1})}\leq O\left(\frac{ek}{2N}\right)^N,$$
		where $\|g\|_{L^2(\mathbb{S}^{l-1})}=1$.
	\end{thm}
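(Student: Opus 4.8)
The plan is to start from the spherical-harmonic representation \eqref{Fk2} of the far-field operator, truncate it at level $N$, and estimate the resulting tail. Set $b_n:=\big(\sum_{m=-n}^{n}|a_n^m|^2\big)^{1/2}$; the normalization $\|g\|_{L^2(\mathbb{S}^{l-1})}=1$ gives $\sum_{n\geq 0}b_n^2=1$, so in particular $b_n\leq 1$ for every $n$. Since $F_{k,N}(g)$ is obtained by replacing $g$ with the partial sum $g_N$ from \eqref{gn}, and hence by truncating the $n$-series in \eqref{Fk2} at $n=N$, we get
\begin{equation*}
F_k(g)(\hat x)-F_{k,N}(g)(\hat x)=k^2\int_{B}q(y)\,u(y,-\hat x)\sum_{n=N+1}^{\infty}i^n j_n(k|y|)\sum_{m=-n}^{n}a_n^m Y_n^m(y/|y|)\,dy .
\end{equation*}
The interchange of the $n$-summation and the $y$-integration is legitimate: by \eqref{jnkr} together with the addition theorem $\sum_{m=-n}^{n}|Y_n^m(\hat y)|^2=(2n+1)/(4\pi)$, the series converges absolutely and uniformly on $B$.

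Next I would bound the integrand pointwise. By Cauchy--Schwarz and the addition theorem,
\begin{equation*}
\Big|\sum_{m=-n}^{n}a_n^m Y_n^m(y/|y|)\Big|\leq b_n\Big(\sum_{m=-n}^{n}|Y_n^m(y/|y|)|^2\Big)^{1/2}=b_n\sqrt{\frac{2n+1}{4\pi}}\leq\sqrt{\frac{2n+1}{4\pi}} .
\end{equation*}
The contrast satisfies $\|q\|_{L^\infty(D)}<\infty$, and by the well-posedness of \eqref{set}, interior elliptic estimates together with the Sobolev embedding in dimension $l\leq 3$, and the fact that the plane waves $u^i(\cdot,-\hat x)$ have norms on a fixed ball bounded uniformly in $\hat x$, the total field obeys $\sup_{\hat x\in\mathbb{S}^{l-1}}\|u(\cdot,-\hat x)\|_{L^\infty(B)}\leq C_0<\infty$. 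Finally \eqref{jnkr} yields a constant $C_1$ and an index $n_0$ with $|j_n(kr)|\leq C_1(ekR/2n)^n$ for all $0\leq r\leq R$ and $n\geq n_0$. Combining these bounds, for $N\geq n_0$,
\begin{equation*}
|F_k(g)(\hat x)-F_{k,N}(g)(\hat x)|\leq C_0C_1k^2|B|\sum_{n=N+1}^{\infty}\sqrt{\frac{2n+1}{4\pi}}\Big(\frac{ekR}{2n}\Big)^n .
\end{equation*}

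It then remains to show that the tail series on the right is $O\big((ek/2N)^N\big)$. Once $N>ekR$, the factor $(ekR/2n)^n$ decays at least geometrically in $n$, so the series is bounded by a fixed multiple of its leading term $\sqrt{2N+3}\,(ekR/2(N+1))^{N+1}$; the polynomial prefactor and the power $R^{N+1}$ are absorbed into the $O$-notation (replacing $R$ by a larger constant when $R>1$ and enlarging $N$ accordingly), and $\big(1/(N+1)\big)^{N+1}$ is comparable to $(1/N)^N$ up to a bounded factor. This produces the pointwise estimate $|F_k(g)(\hat x)-F_{k,N}(g)(\hat x)|\leq O\big((ek/2N)^N\big)$ uniformly in $\hat x\in\mathbb{S}^{l-1}$; integrating over the unit sphere and multiplying by the constant $|\mathbb{S}^{l-1}|^{1/2}$ yields the claimed $L^2$ bound. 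In two dimensions the argument is identical after replacing \eqref{g} and the Funk--Hecke formula by the Fourier expansion $g=\sum_n a_n e^{in\theta}$ and the Jacobi--Anger identity, with $j_n$ replaced by the Bessel function $J_n$, whose large-order asymptotics exhibit the same $(ekR/2n)^n$ behaviour.

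The step I expect to be the main obstacle is the uniform-in-$\hat x$ bound on the total field $u(y,-\hat x)$ over $B\times\mathbb{S}^{l-1}$: this requires invoking the well-posedness theory for \eqref{set} in $H^1_{loc}(\mathbb{R}^l)$, upgrading to an $L^\infty$ bound on the fixed ball $B$ via interior regularity (since $q\in L^\infty$ forces $u\in H^2_{loc}$, which embeds into $L^\infty$ for $l\leq 3$), and checking that the resulting constant does not blow up as $-\hat x$ ranges over the compact set $\mathbb{S}^{l-1}$. Everything else is careful bookkeeping with the $O$-notation to confirm that the $\sqrt{2n+1}$ and $R^n$ factors are harmless and that the infinite tail is controlled by its first term.
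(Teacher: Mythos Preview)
Your proof is correct and follows essentially the same strategy as the paper: expand $F_k(g)-F_{k,N}(g)$ via \eqref{Fk2} as a tail in spherical harmonics, pull out a uniform bound on $q(y)u(y,-\hat x)$, and control the remainder by the large-order asymptotics \eqref{jnkr} of $j_n$. The only tactical difference is that you obtain a pointwise $L^\infty(\mathbb{S}^{l-1})$ estimate via the addition theorem $\sum_m|Y_n^m|^2=(2n+1)/(4\pi)$ and Cauchy--Schwarz (picking up a harmless $\sqrt{2n+1}$), whereas the paper works directly in $L^2$ and uses the orthonormality of the $Y_n^m$ after switching to polar coordinates on $B$, which avoids that factor; both routes lead to the same final bound. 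Your explicit justification of the uniform-in-$\hat x$ bound $\sup_{\hat x}\|u(\cdot,-\hat x)\|_{L^\infty(B)}<\infty$ via well-posedness and interior regularity is in fact more careful than the paper, which simply posits $M=\sup_{y,\hat x}|q(y)u(y,-\hat x)|$ without comment.
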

	\begin{proof}
		Let $B\in\mathbb{R}^{l}$ be a bounded open set enclosing scatter $D$. Without loss of generality, we assume that $B$ is a ball centered at $0$ with radius $R$, {\it i.e.}, $B=B(0,R)$.
		
		By \eqref{far-pattern} and \eqref{gn}, $F_{k,N}(g)$ can be expanded as
		\begin{equation}
		F_{k,N}(g)(\hat{x})=\frac{k^2}{4\pi}\int_{B}q(y)u(y,-\hat{x})\sum_{n=0}^{N}\sum_{m=-n}^{n}a^m_n4\pi i^n j_n(k|y|)Y^m_n(y/|y|)dy.
		\label{FkN}
		\end{equation}
		
		From \eqref{Fk2} and \eqref{FkN} we have
		\begin{equation*}
		F_k(g)(\hat{x})-F_{k,N}(g)(\hat{x})=\frac{k^2}{4\pi}\int_{B}q(y)u(y,-\hat{x})\sum_{n=N+1}^{\infty}\sum_{m=-n}^{n}a^m_n4\pi i^n j_n(k|y|)Y^m_n(y/|y|)dy.
		\end{equation*}
		Then
		\begin{equation*}
		\begin{split}
		&\|F_k(g)-F_{k,N}(g)\|^2_{L^2(\mathbb{S}^{l-1})}\\
		&=\frac{k^4}{16\pi^2}\int_{\mathbb{S}^{l-1}}\left|\int_{B}q(y)u(y,-\hat{x})\sum_{n=N+1}^{\infty}\sum_{m=-n}^{n}a^m_n4\pi i^n j_n(k|y|)Y^m_n(y/|y|)dy\right|^2ds(\hat{x})\\
		&\leq \frac{k^4}{16\pi^2}\int_{\mathbb{S}^{l-1}}\sup_{y \in B}\left|q(y)u(y,-\hat{x})\right|\sum_{n=N+1}^{\infty}\sum_{m=-n}^{n}16\pi^2|a^m_n|^2\int_{0}^{R}r^{l-1}|j_n(kr)|^2drds(\hat{x})\\
		&\leq k^4M|\mathbb{S}^{l-1}|\sum_{n=N+1}^{\infty}\sum_{m=-n}^{n}|a^m_n|^2\int_{0}^{R}r^{l-1}|j_n(kr)|^2dr,
		\end{split}
		\end{equation*}
		where $M:=\sup_{y \in B,\hat{x}\in\mathbb{S}^{l-1}}|q(y)u(y,-\hat{x})|$, $|\mathbb{S}^{l-1}|$ is the area of $|\mathbb{S}^{l-1}|$.
		Due to \eqref{jnkr}, we have
		\begin{equation}
		\begin{split}
		&\|F_k(g)-F_{k,N}(g)\|^2_{L^2(\mathbb{S}^{l-1})}\\
		&\leq
		k^4M|\mathbb{S}^{l-1}\|g\|^2_{L^2(\mathbb{S}^{l-1})}\sup_{n>N}\int_{0}^{R}r^{l-1}|j_n(kr)|^2dr\\
		&\leq k^4M|\mathbb{S}^{l-1}|\sup_{n>N}\int_{0}^{R}r^{l-1}|j_n(kr)|^2dr\\
		&= k^4M|\mathbb{S}^{l-1}|\sup_{n>N}O\left(\frac{ek}{2n}\right)^{2n}\int_{0}^{R}r^{2n+l-1}dr\\
		&\leq  k^4M|\mathbb{S}^{l-1}|O\left(\frac{ek}{2N}\right)^{2N}\frac{R^{2N+l}}{2N+l}\\
		&=O\left(\frac{ek}{2N}\right)^{2N}
		\end{split}
		\end{equation}
		for $N$ sufficiently large. The proof is done.
		
	\end{proof}
	
	Let $\mathcal{T}$ denote the set of all the interior transmission eigenvalues. We have the following theorem.
	\begin{thm}\label{kTtheorem}
		If $k\in\mathcal{T}$ with respect to the transmission eigenvalue problem \eqref{interior} ,  then for $\forall\epsilon>0$ and a sufficiently large $N$, there holds
		\begin{equation}\label{kT}
		\|F_{k,N}(g)\|_{L^2(\mathbb{S}^{l-1})}\leq C\epsilon+O\left(\frac{ek}{2N+1}\right)^{N+1},
		\end{equation}
		where $\|g\|_{L^2(\mathbb{S}^{l-1})}=1$.
	\end{thm}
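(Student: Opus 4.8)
The plan is to relate $F_{k,N}(g)$ to the truncation $g_N$ of the Herglotz kernel that approximates a transmission eigenfunction, reduce the bound to estimating $\|F_k(g)\|_{L^2(\mathbb S^{l-1})}$, and then exploit an orthogonality relation forced by \eqref{interior}. First I would fix $k\in\mathcal T$ with an eigenpair $(u_0,v)$, $\Delta u_0+k^2(1+q)u_0=0$, $\Delta v+k^2 v=0$ in $D$, $u_0-v\in H^2_0(D)$, and invoke Theorem \ref{denseness} together with \eqref{Herg_vg} to produce a normalized Herglotz wave $v_g=H_k(g)$, $\|g\|_{L^2(\mathbb S^{l-1})}=1$, with $\|v_g-v\|_{H^1(D)}\le\epsilon$ (normalizing the approximating kernel amounts to rescaling the eigenpair). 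Since $F_{k,N}(g)=F_k(g_N)$ with $g_N$ as in \eqref{gn}, the triangle inequality gives
\[
\|F_{k,N}(g)\|_{L^2(\mathbb S^{l-1})}\le\|F_k(g)\|_{L^2(\mathbb S^{l-1})}+\|F_k(g)-F_{k,N}(g)\|_{L^2(\mathbb S^{l-1})}\le\|F_k(g)\|_{L^2(\mathbb S^{l-1})}+O\!\left(\frac{ek}{2N+1}\right)^{N+1},
\]
the last term being controlled by (the proof of) Theorem \ref{F-Fsmall} applied to the Fourier tail $g-g_N$. So it suffices to show $\|F_k(g)\|_{L^2(\mathbb S^{l-1})}\le C\epsilon$.

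For that I would use the representation already obtained in deriving \eqref{Fk2}: the inner series there is precisely $v_g$, so, with $u(\cdot,-\hat x)$ denoting the total field of \eqref{set} for the incident plane wave $e^{-ikx\cdot\hat x}$,
\[
F_k(g)(\hat x)=\frac{k^2}{4\pi}\int_D q(y)\,u(y,-\hat x)\,v_g(y)\,dy .
\]
Writing $v_g=v+(v_g-v)$, the contribution of $v_g-v$ is bounded uniformly in $\hat x$ by Cauchy--Schwarz,
\[
\left|\frac{k^2}{4\pi}\int_D q\,u(\cdot,-\hat x)(v_g-v)\,dy\right|\le\frac{k^2}{4\pi}\|q\|_{L^\infty(D)}\Big(\sup_{\hat x\in\mathbb S^{l-1}}\|u(\cdot,-\hat x)\|_{L^2(D)}\Big)\|v_g-v\|_{L^2(D)}\le C_1\epsilon ,
\]
the supremum being finite since the total field depends continuously on the incident direction over the compact sphere $\mathbb S^{l-1}$.

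The key step is the orthogonality identity $\int_D q(y)\,v(y)\,u(y,-\hat x)\,dy=0$ for every $\hat x\in\mathbb S^{l-1}$. I would prove it as follows: let $\phi$ be the extension of $u_0-v$ by zero to $\mathbb R^l$; since $u_0-v\in H^2_0(D)$, $\phi\in H^2(\mathbb R^l)$ has compact support, and a direct computation using the two equations for $u_0,v$ gives $(\Delta+k^2)\phi=-k^2 q u_0$ in $\mathbb R^l$. Pairing this against the total field $u(\cdot,-\hat x)$, for which $(\Delta+k^2)u(\cdot,-\hat x)=-k^2 q u(\cdot,-\hat x)$, and integrating by parts over a large ball containing $\operatorname{supp}\phi$ (all boundary terms vanish because $\phi$ and $\nabla\phi$ are compactly supported) yields
\[
-k^2\int_D q\,u_0\,u(\cdot,-\hat x)\,dy=\int_{\mathbb R^l}\phi\,(\Delta+k^2)u(\cdot,-\hat x)\,dy=-k^2\int_D q\,(u_0-v)\,u(\cdot,-\hat x)\,dy ,
\]
i.e. $\int_D q\,v\,u(\cdot,-\hat x)\,dy=0$. (Formally this is just Green's second identity applied twice on $D$, matching Cauchy data via $u_0-v\in H^2_0(D)$, but the zero-extension of $u_0-v$ as a compactly supported $H^2(\mathbb R^l)$ function is what makes the manipulation rigorous given that the transmission eigenfunctions are only known to lie in $L^2(D)$.) Consequently $F_k(g)(\hat x)=\frac{k^2}{4\pi}\int_D q\,u(\cdot,-\hat x)(v_g-v)\,dy$, so $\|F_k(g)\|_{L^2(\mathbb S^{l-1})}\le|\mathbb S^{l-1}|^{1/2}C_1\epsilon=:C\epsilon$, and combining with the first display gives \eqref{kT}. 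The main obstacle is precisely this orthogonality step: converting the formal Green's-identity argument into an honest integration by parts despite the low a priori regularity of the eigenfunctions; once that is settled, the remainder is routine estimation together with Theorem \ref{F-Fsmall}.
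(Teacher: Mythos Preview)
Your argument is correct and reaches the same conclusion, but the route differs from the paper's. The paper works at the level of scattered fields rather than the integral representation: it views $H_{k,N}(g)$ as an incident wave with scattered field $v_s$, sets $u^s$ equal to the zero-extension of $u_0-v$ (which is a compactly supported $H^2$ function and hence a radiating solution with vanishing far field), derives the source problems $(\Delta+k^2(1+q))u^s=k^2qv$ and $(\Delta+k^2(1+q))v_s=k^2qH_{k,N}(g)$, subtracts, and invokes the stability estimate for the forward scattering problem to bound $\|u^s-v_s\|_{H^1}$ by $C\|v-H_{k,N}(g)\|$. Since $u^s$ has zero far field, this gives \eqref{kT} directly, with the $O\bigl(\frac{ek}{2N+1}\bigr)^{N+1}$ term arising from $\|H_k(g)-H_{k,N}(g)\|_{L^2(B)}$ rather than from Theorem~\ref{F-Fsmall}. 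Your approach is in a sense the dual of this: your orthogonality identity $\int_D q\,v\,u(\cdot,-\hat x)\,dy=0$ is precisely the statement that the far field of $u^s$ vanishes, obtained by pairing with the total field instead of appealing to black-box well-posedness. The advantage of your route is that it is more self-contained (only Green's identity, no stability constant for the forward map); the paper's route is perhaps more transparent structurally and avoids the explicit manipulation of the reciprocity formula \eqref{Fk}. Both handle the low regularity of the eigenpair in the same way, namely through the compactly supported $H^2$ extension of $u_0-v$.
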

	\begin{proof}
		Let $k\in\mathcal{T}$, and $u,v$ are the corresponding eigenfunctions. For any sufficiently small $\epsilon\in \mathbb{R}^+$, by the denseness property of Herglotz functions, see Theorem \ref{denseness}, there exists Herglotz wave function \eqref{Herg_vg} with $\|g\|_{L^2(\mathbb{S}^{l-1})}=1$, such that for any sufficiently small $\epsilon\in \mathbb{R}^+$, there is
		\begin{equation}
		\|v_g-v\|_{H^1(D)}\leq \epsilon.
		\end{equation}
		
		Denote $v_g$ by $H_k(g)$, and define
		$$H_{k,N}(g)(x)=\int_{\mathbb{S}^{l-1}}e^{ikx\cdot d}g_N(d)ds(d),~~x\in\mathbb{R}^l.$$
		By \eqref{g}, \eqref{gn} and the Funk-Hecke formula, $H_k(g)$ and $H_{k,N}(g)$ can be expanded as
		\begin{equation}\label{Hkg}
		H_k(g)(x)=4\pi\sum_{n=0}^{\infty}\sum_{m=-n}^{n}a^m_ni^nj_n(k|x|)Y^m_n(\hat{x})
		\end{equation}
		and
		$$H_{k,N}(g)(x)=4\pi\sum_{n=0}^{N}\sum_{m=-n}^{n}a^m_ni^nj_n(k|x|)Y^m_n(\hat{x}),$$
		for all $x\in\mathbb{R}^l$. Then
		$$H_k(g)(x)-H_{k,N}(g)(x)=4\pi\sum_{n=N}^{\infty}\sum_{m=-n}^{n}a^m_ni^nj_n(k|x|)Y^m_n(\hat{x}).$$
		
		Similarly to Theorem \ref{F-Fsmall}, let $B\in\mathbb{R}^{l}$ be a bounded open set enclosing scatter $D$. Without loss of generality, we assume that $B$ is a ball centered at $0$ with radius $R$, {\it i.e.}, $B=B(0,R)$. Then
		\begin{equation}
		\begin{split}
		\|H_k(g)-H_{k,N}(g)\|^2_{L^2(B)}&=\sum_{n=N}^{\infty}\sum_{m=-n}^{n}16\pi^2|a^m_n|^2\int_{0}^{R}r^{l-1}|j_n(kr)|^2dr\\
		&\leq 16\pi^2|a^m_n|^2\max_{n\geq N}\int_{0}^{R}r^{l-1}|j_n(kr)|^2dr.
		\end{split}
		\end{equation}
		Since
		$$\int_{R}r^{l-1}|j_n(kr)|^2dr= O\left(\frac{ek}{2N+1}\right)^{2N+2}\frac{R^{2N+l}}{2N+l}\rightarrow 0,$$
		as $N\rightarrow \infty$ for any fixed $R$, then we have
		\begin{equation}\label{H-H}
		\|H_k(g)-H_{k,N}(g)\|_{L^2(B)}\leq O\left(\frac{ek}{2N+1}\right)^{N+1},
		\end{equation}
		for large $N$.
		
		Let $H_{k,N}(g)$ be the incident wave on $D$ and $v_s$ be the corresponding scattered wave. The total wave $v_k=H_{k,N}(g)+v_s$ satisfies
		\begin{equation}
		\begin{cases}
		\Delta v_k+k^2(1+q)v_k=0~\quad &~\mbox{in}~ \mathbb{R}^l,\\
		\lim_{r\rightarrow\infty}r^{(l-1)/2}\left(\frac{\partial v_s}{\partial\nu}-ikv_s\right)=0,&~r=|x|,
		\end{cases}
		\end{equation}
		where $q\equiv 0$ in $\mathbb{R}^l\backslash \bar{D}$. Clearly we can see that $F_{k,N}(g)$ is the far-field pattern of the scattered field $v_s$ corresponding to the incident wave $H_{k,N}(g)$.
		
		Set
		\begin{equation}\label{us}
		u^s=	\begin{cases}
		u-v~\quad &~\mbox{in}~ D,\\
		0~\quad &~\mbox{in}~ \mathbb{R}^l\backslash \bar{D}.
		\end{cases}
		\end{equation}
		Then we have
		\begin{equation}\label{w-v}
		\begin{cases}
		\Delta u^s+k^2(1+q)u^s=k^2qv,\quad &~\mbox{in}~ \mathbb{R}^l,\\
		\lim_{r\rightarrow\infty}r^{(l-1)/2}\left(\frac{\partial u^s}{\partial\nu}-iku^s\right)=0,&~r=|x|.
		\end{cases}
		\end{equation}
		On the other hand,
		\begin{equation}\label{vs}
		\begin{cases}
		\Delta v_s+k^2(1+q)v_s=k^2qH_{k,N}(g)~\quad &~\mbox{in}~ \mathbb{R}^l,\\
		\lim_{r\rightarrow\infty}r^{(l-1)/2}\left(\frac{\partial v_s}{\partial\nu}-ikv_s\right)=0,&~r=|x|.
		\end{cases}
		\end{equation}
		Subtracting \eqref{vs} from \eqref{w-v}, we have
		\begin{equation}\label{posedness}
		\begin{cases}
		\Delta (u^s-v_s)+k^2(1+q)(u^s-v_s)=k^2q(v-H_{k,N}(g))~\quad &~\mbox{in}~ \mathbb{R}^l,\\
		\lim_{r\rightarrow\infty}r^{(l-1)/2}\left(\frac{\partial (u^s-v_s)}{\partial\nu}-ik(u^s-v_s)\right)=0,&~r=|x|.
		\end{cases}
		\end{equation}
		By the well-posedness of the scattering problem \eqref{posedness}, see for example \cite{CK}, we have the following estimate
		$$\|u^s-v_s\|_{H^1(\mathbb{R}^l)}\leq C\|v-H_{k,N}(g)\|_{H^1(\mathbb{R}^l)},$$
		where $C$ is positive constant depending on $k,q$. It follows from \eqref{H-H} that
		$$\|u^s-v_s\|_{H^1(\mathbb{R}^l)}\leq C\epsilon+O\left(\frac{ek}{2N+1}\right)^{N+1}.$$
		Therefore, by \eqref{us} and the well-posedness of the scattering problem, one readily has \eqref{kT}.
	\end{proof}
	
	If $k$ does not belong to the interior transmission eigenvalue class $\mathcal{T}$, we have the following theorem.
	\begin{thm}\label{lower-bound}
		If $k\notin\mathcal{T}$, {\it i.e.}, $dist(k, \mathcal{T})\geq \delta_0$, for some $\delta_0\in\mathbb{R}^+$, then for $\forall g\in L^2(\mathbb{S}^{l-1})$, $\|g\|_{L^2(\mathbb{S}^{l-1})}=1$, there exists $N$ large enough and some constant $C>0$ such that
		$$\|F_{k,N}(g)\|_{L^2(\mathbb{S}^{l-1})}\geq C,$$
		where the constant $C$ depends on $q, k, N, \delta_0$.
	\end{thm}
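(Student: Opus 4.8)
The plan is to reduce the statement to the \emph{injectivity} of the far-field operator $F_k$ at a wave number $k$ which is not an interior transmission eigenvalue, combined with the elementary observation that $F_{k,N}(g)$ is exactly $F_k$ evaluated at the truncated Herglotz kernel $g_N$ of \eqref{gn}, and that $g_N\to g$ in $L^2(\mathbb{S}^{l-1})$.

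First I would record that, straight from the definitions, $F_{k,N}(g)=F_k(g_N)$, and that $\|g_N\|_{L^2(\mathbb{S}^{l-1})}\to\|g\|_{L^2(\mathbb{S}^{l-1})}=1$ by Parseval, so that $g_N\to g$ in $L^2(\mathbb{S}^{l-1})$ and hence, $F_k$ being bounded on $L^2(\mathbb{S}^{l-1})$, $F_k(g_N)\to F_k(g)$. The key point is then that $k\notin\mathcal{T}$ forces $F_k$ to be injective. To see this I would take $h\in L^2(\mathbb{S}^{l-1})$ with $F_k(h)=0$, let $v_h=H_k(h)$ be the corresponding incident Herglotz wave, $v^s$ the scattered field it induces and $w=v_h+v^s$ the total field; the vanishing of the far-field pattern of $v^s$ forces, via Rellich's lemma and unique continuation in the connected exterior $D^c$, that $v^s\equiv0$ in $\mathbb{R}^l\setminus\bar D$. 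Subtracting the Helmholtz equation for $v_h$ from that for $w$ gives $\Delta v^s+k^2v^s=-k^2 q\,w$ in $\mathbb{R}^l$, whose right-hand side lies in $L^2$ and is supported in $\bar D$; hence $v^s\in H^2_{\mathrm{loc}}(\mathbb{R}^l)$, and since $v^s$ vanishes outside $\bar D$ its Cauchy data on $\partial D$ vanish, so $v^s|_D\in H^2_0(D)$. Thus $(w|_D,v_h|_D)$ is a solution of the interior transmission problem \eqref{interior}, and since $k\notin\mathcal{T}$ this pair is trivial; in particular $v_h\equiv0$ on $D$, hence $v_h\equiv0$ on $\mathbb{R}^l$ by real-analyticity, and the injectivity of the Herglotz operator yields $h=0$.

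With injectivity in hand, I would fix a normalized $g$, so that $\|F_k(g)\|_{L^2(\mathbb{S}^{l-1})}>0$, and use $F_k(g_N)\to F_k(g)$ to pick $N$ large enough that $\|F_{k,N}(g)\|_{L^2(\mathbb{S}^{l-1})}=\|F_k(g_N)\|_{L^2(\mathbb{S}^{l-1})}\ge\tfrac12\|F_k(g)\|_{L^2(\mathbb{S}^{l-1})}=:C$. To exhibit the dependence of $C$ on $q,k,N,\delta_0$ stated in the theorem, one may instead take $C=\tfrac12 c_N$, where $c_N$ is the minimum of $\|F_k v\|_{L^2(\mathbb{S}^{l-1})}$ over unit vectors $v$ in the finite-dimensional span of the spherical harmonics $Y_n^m$ with $0\le n\le N$; this minimum is attained and strictly positive precisely because $F_k$ is injective, the hypothesis $\operatorname{dist}(k,\mathcal{T})\ge\delta_0>0$ entering here to guarantee that injectivity, while $q$ and $k$ enter through the far-field map.

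I expect the main obstacle to be the injectivity argument: deducing from the vanishing of the far field, after Rellich's lemma and the $H^2$-regularity of $v^s$ up to $\partial D$, that one genuinely obtains a solution pair of \eqref{interior} requires care with the function spaces for the Lipschitz domain $D$ (in particular that $v^s|_D\in H^2_0(D)$). A further, heavier issue arises only if one insists on a \emph{quantitative} degradation of $C$ as $k\to\mathcal{T}$, i.e.\ an explicit rather than merely qualitative role for $\delta_0$: that would call for a perturbation or Fredholm-type estimate for the transmission problem, which is substantially more delicate than the rest of the argument and is not needed for the existential statement as phrased.
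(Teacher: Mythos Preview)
Your proposal is correct and, while it shares the same underlying principle as the paper---that a vanishing far field at a non-transmission eigenvalue $k$ would force the incident Herglotz wave to be part of a transmission-eigenfunction pair---it proceeds along a genuinely different route. The paper first derives a quantitative lower bound $\|H_{k,N}(g)\|_{L^2(B)}^2\ge 16\pi^2\min_{0\le n\le N}\mathcal{C}_n$ via pointwise lower bounds for spherical Bessel functions (Proposition~3.1 of \cite{RS}), and then argues by contradiction in a rather informal way: if $F_{k,N}(g)\approx 0$ then $H_{k,N}(g)$ would have to be either trivial (ruled out by the Bessel bound) or an approximate transmission eigenfunction (ruled out by $k\notin\mathcal{T}$). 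You bypass the Bessel-function machinery entirely, instead establishing the exact injectivity of $F_k$ on $L^2(\mathbb{S}^{l-1})$ through Rellich's lemma, $H^2$-regularity, and the identification $(w|_D,v_h|_D)$ as a solution of \eqref{interior}; the lower bound then follows from the elementary observation $F_{k,N}(g)=F_k(g_N)$, $g_N\to g$, and either convergence or a finite-dimensional compactness argument. Your argument is cleaner and more rigorous---it avoids the heuristic ``$\approx 0$'' step and the appeal to external Bessel estimates---while the paper's approach, in principle, yields more explicit information about the size of the truncated Herglotz wave in $B$, which could be useful if one sought a computable constant. Your caveat about the $H^2_0(D)$ membership on a Lipschitz domain is well placed; the paper does not address this point either.
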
	
	\begin{proof}
		By the expansion \eqref{Hkg}, we have
		\begin{equation*}
		\begin{split}
		\|H_{k,N}(g)\|^2_{L^2(B)}&=\int_{0}^{R}r^{l-1}\int_{\mathbb{S}^{l-1}}|H_{k,N}(g)(rd)|^2ds(d)dr\\
		&=\sum_{n=0}^{N}\sum_{m=-n}^{n}16\pi^2|a^m_n|^2\int_{0}^{R}r^{l-1}|j_n(kr)|^2dr.
		\end{split}
		\end{equation*}
		By Proposition 3.1 in \cite{RS}, and the fact that $j_n(kr)=\sqrt{\pi/(2kr)}j_{n+\frac{1}{2}}(kr)$ we see that
		$$j_n(kr)\geq \frac{(1-\epsilon)e^{n+\frac{1}{2}}k^n}{\sqrt{2}(2n+1)^{n+1}}r^n$$
		for any $0<\epsilon<e^{-1}$, $0<r\leq R$ and $n\geq C(\epsilon,R,k)$, for some constant $C(\epsilon,R,k)$ depending on $\epsilon,R,k$. Then
		$$\int_{0}^{R}r^{l-1}|j_n(kr)|^2dr\geq \frac{(1-\epsilon)^2e^{2n+1}k^{2n}}{2(2n+1)^{2n+2}}\frac{R^{2n+l}}{2n+l}:=\mathcal{C}_n=O\left(\frac{ek}{2n+1}\right)^{2n+2},$$
		and 	
		$$\int_{0}^{R}r^{l-1}|j_n(kr)|^2dr\rightarrow 0,$$
		as $n\rightarrow \infty$ for any fixed $R$. Thus
		\begin{equation}
		\|H_{k,N}(g)\|^2_{L^2(B)}\geq 16\pi^2\|g\|^2_{L^2(\mathbb{S}^{l-1})}\min_{0\leq n\leq N}\int_{0}^{R}r^{l-1}|j_n(kr)|^2dr=16\pi^2\min_{0\leq n\leq N}\mathcal{C}_n.
		\label{Hk}
		\end{equation}
		Note that we consider $H_{k,N}(g)$ for only finite order $N$, then $\mathcal{C}_N$ is a positive number.
		
		We prove that $F_{k,N}(g)$ has a lower bound using the  contradiction argument. Suppose that $F_{k,N}(g)\approx 0$. Since $F_{k,N}(g)$ is the far-field pattern corresponding to the incident wave $H_{k,N}(g)$, then it follows that $H_{k,N}(g_\varepsilon)$ is either zero, or otherwise is a Herglotz wave function approximation to the transmission eigenfunction corresponding to transmission eigenvalue $k$. This contradicts to \eqref{Hk} and $k\notin\mathcal{T}$. The proof is done.	
	\end{proof}
	
	\section{Recovery Scheme}
	Based on our study in the previous section, we shall present a recovery scheme of locating scatters which possesses cusp singularities and reconstructing the shape of penetrable scatter when it is of polyhedral shape. There is no restriction on the size of the scatter in both situations. Our scheme for the reconstruction is based on the intrinsic geometric properties of the transmission eigenfunctions. If the unknown scattering obstacle $D$ has a transmission eigenfunction which is approximated by an entire Herglotz wave function
	$$v_g(x)=\int_{\mathbb{S}^{l-1}}e^{ikx\cdot d}g(d)ds(d)$$
	with proper chosen kernel function $g\in L^2(\mathbb{S}^{l-1})$, where $k$ is the wave number. Then by finding the vanishing and localizing points of the Herglotz wave function we can locate all the cusp singularities of the scatter.
	
	We shall first concentrate on the problem of retrieving the transmission eigenvalues from far field data over a range of test frequency region. Theoretically, we can retrieve transmission eigenvalues from far field data under no a prior information on $D$. In fact, without any a prior assumption on $D$, the range of test frequency could be wide, therefore the computation is a relatively huge work. We recall the fact that the first transmission eigenvalue, the first Dirichlet eigenvalue and the refractive index of scatter have the estimation in Theorem \ref{thm:bounds} in preliminary. Therefore, if we know a priori assumption on the size of domain $D$ and on the upper and lower bounds of refractive index $n$, using the estimation in Theorem \ref{thm:bounds}, we can significantly narrow the searching region of frequency, further reduce the computing cost.

	Given proper searching frequency region $(\alpha,\beta)$, the forward problem \eqref{set} is first solved under the incident wave $u^i=e^{ikx\cdot d}$, where $k\in(\alpha,\beta)$. We collect the far field pattern on the circular boundary which enclose the desired polyhedron for $n$ incident directions and $m$ observation directions, where $n,m$ are positive integers. All the incident directions for each $k$ are uniformly distributed on the unit circle or unit sphere, and all the observation points are uniformly distributed on the closed circle (two-dimensional) or surface (three-dimensional) enclosing the scatterer.
	
	From the discussion in the previous section, if $k_0$ is a transmission eigenvalue, then there should exist integer $N$ sufficiently large and $g\in L^2(\mathbb{S}^{l-1})$ satisfying $\|g\|_{L^2(\mathbb{S}^{l-1})}=1$, such that
	\beq\label{Fg}
	F_{k_0,N}(g)(\hat{x})\approx 0,
	\eeq
	for all incident directions $d$ and observation directions $\hat{x}$. By reciprocity relation, \eqref{Fg} is equivalent to
	$$\int_{\mathbb{S}^{l-1}}u^{\infty}(\hat{x},k_0,d)g_N(\hat{x})ds(\hat{x})\approx 0,$$
	for all incident directions $d$. Then, the work is reduced to be a nonlinear minimization problem. We define the cost functional of the minimization problem as
	\begin{equation}\label{min}
	\mathcal{F}(k,g)=\min_{\|g\|_{L^2(\mathbb{S}^{l-1})}=1}\sum_{d}\Big|F_{k,N}(g)\Big|.
	\end{equation}
	for $N$ sufficiently large. The main issue here is to find proper $k$ and $g$ such that the cost functional is minimized to sufficiently small number. From Theorem \ref{kTtheorem} and \ref{lower-bound}, we see that if the above $k$ can be found, then it must be a transmission eigenvalue. Equipped with the finding transmission eigenvalue $k$ and Herglotz kernel $g$, the Herglotz wave function can be calculated
	\beq\label{Tran-fun}
	v_g(x)=\int_{\mathbb{S}^{l-1}}e^{ik_0x\cdot d}g_N(d)ds(d),~~x\in\mathbb{R}^l,
	\eeq
	where we know it is the extension of corresponding transmission eigenfunction. The locating of cusp singularities then can be achieved by finding the vanishing and localizing points of Herglotz wave function $v_g$ \eqref{Tran-fun}.

	Summarizing the above discussion, we present the scheme of locating cusp singularities scatter $D$ as follows:
	
	\medskip
	
	\noindent {\underline {\bf Recovery Scheme}.}
	
	\medskip
	
	Step 1.~~Fix the searching frequency region $(\alpha,\beta)$ via estimation in Theorem \ref{thm:bounds}, discretize $(\alpha,\beta)$ by step size $h$ as $\alpha,\alpha+h,\alpha+2h,...,\beta$, and set j=0.
	
	Step 2.~~Set $k=\alpha+jh$. Collect the far field pattern on the circular boundary of radius $R$ which enclose the desired polyhedron for $m$ uniformly distributed observation directions $\hat{x}$ and $n$ uniformly distributed incident directions $d$, where the incident directions are uniformly distributed as $d_{\theta}=(\cos\theta,\sin\theta),\theta=0:2\pi/n:2\pi(1-1/n)$, the $m$ observation directions $\hat{x}$ are uniformly distributed on the unit circle or unit sphere.
	
	Step 3.~~Solve the  minimization problem \eqref{min} for $N$ sufficiently large. If the cost functional $\mathcal{F}(k,g)$ can be minimized to zero, then go to the next step; Otherwise, go to step 2.
	
	Step 4.~~Define
	\begin{equation}\label{Herg0}
	v_g(k)=\int_{\mathbb{S}^{l-1}}e^{ikx\cdot d}g(d)ds(d)
	\end{equation}
	to be the Herglotz function (which is the anlytic extension of transmission eigenfunction). Find the vanishing and localizing points of $v_g$. Those points are the cusp singularities of the desired scatter.
	
	If further a priori information is available on the support of the medium, say, it is a convex polyhedron, then we can recover its shape by simply joining the cusp singularities by line. The recovery scheme works in a very general and practical setting. It is no need to give any a priori knowledge of the scatter. There is also no restriction to the size of the scatter.
	
	\section{Numerical experiments and discussions}
	In this section, we present some numerical tests to verify the applicability and effectiveness of the proposed recovery scheme.
	
	we first concentrate on the problem of retrieving the transmission eigenvalues and eigenfunctions from far field data over a range of frequencies. We collect far field data by solving the forward equation \eqref{set} under incident wave $u^i=e^{ikx\cdot d}$, for each $k\in (\alpha,\beta)$ with step $h=0.01$ and each observation direction $d$. The forward problem is solved by using the quadratic finite element discretization on a truncated circular (two-dimensional) or spherical (three-dimensional) domain enclosed by a perfectly matched layer (PML). The forward equation is solved on a sequence of successively refined meshes till the relative error of two successive finite element solutions between the two adjacent meshes is below $0.1\%$. The scattered data are transformed into the far-field data by employing the Kirchhoff integral formula on a closed circle (two-dimensional) or surface (three-dimensional) enclosing the scatterer. We collect the far field pattern on the circular boundary which enclose the desired polyhedral scatter for incident directions $d$ and observation directions $\hat{x}$ (precisely, $128$ directions for each $d$ uniformly distributed on the unit circle in 2D or unit sphere in 3D, $64$ observation points uniformly distributed on the inner part of PML) and a range of wave numbers $k\in(\alpha,\beta)$. Solve the minimization problem \eqref{min} and find the desired $k$ and $g$. The minimizer of the minimization problem \eqref{min} is obtained by employing a derivative-free trust region method via a local quadratic surrogate model-based search algorithm. Finally, the vanishing and localizing points of Herglotz wave equation \eqref{Herg0} give us the location of cusp singularities.

	
	The following numerical experiments consist two parts, one is to locate penetrable scatter of general shape with cusp points, the other is to reconstruct the support of polyhedral type shape.
	
	\subsection{Locating of general shaped scatters with cusps}
	
	\subsubsection{Example: rain drop shape}
	The true scatter $D$ is of rain drop shape with boundary $\partial D$ illustrated in Figure \ref{fig:rain},
	\begin{figure}[t]
		\centering
		\includegraphics[width=0.45\textwidth]{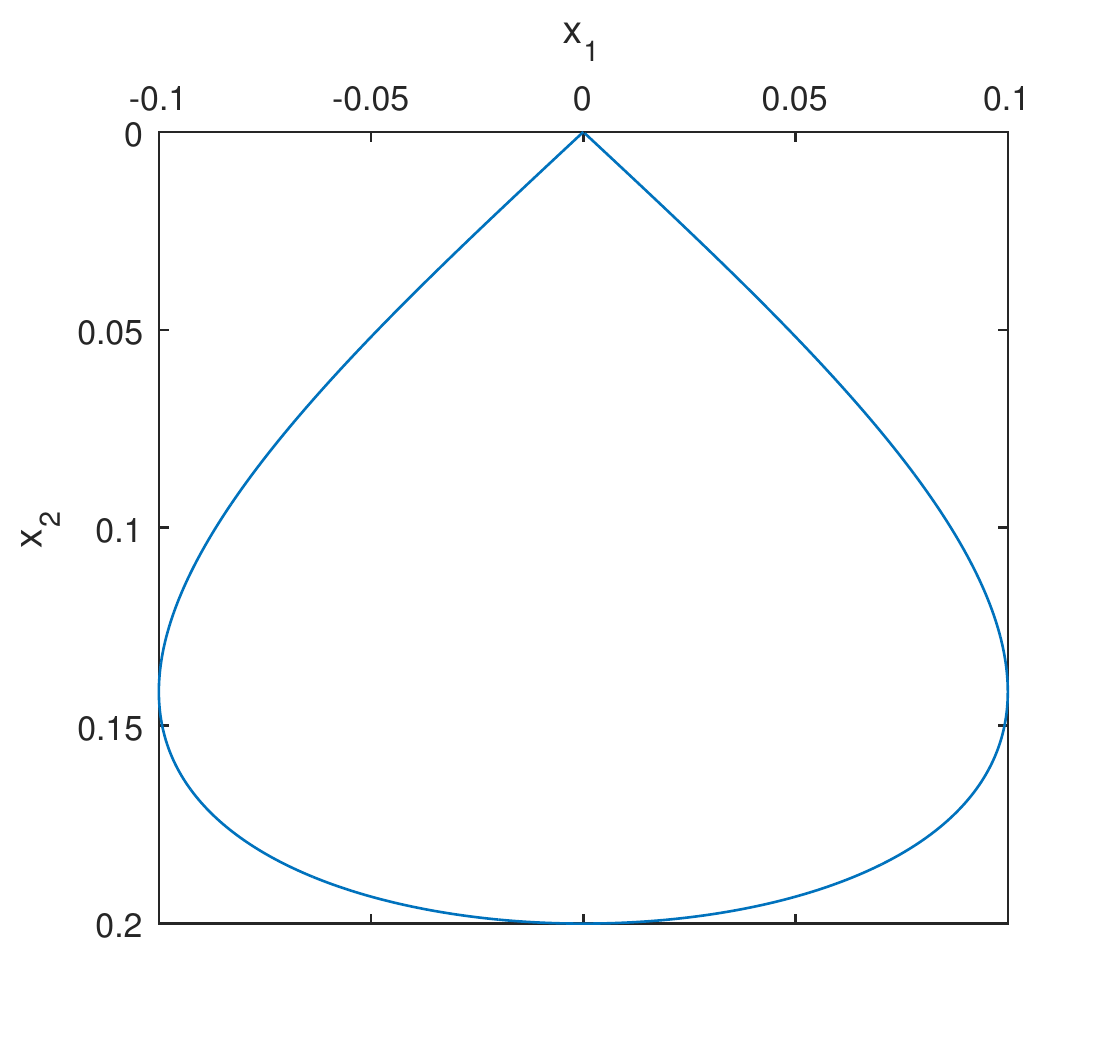}
		\caption{Rain shaped domain}
		\label{fig:rain}
	\end{figure}
	and described by $270^{\circ}$ rotation of the parametric representation
	$$x(t)=(\frac{1}{5}\sin\frac{t}{2},-\frac{1}{10}\sin t),~0\leq t\leq 2\pi.$$
	The location of the corner is $(0,0)$. The refractive index of $D$ is constant $n=4$ which is not known a prior.  From the estimation \eqref{esti_geqn}, we define the searching frequency region to be $(30,35)$ and the searching step is $h=0.01$. We find that when $k=32.14$, the minimization problem \eqref{min} achieve its minimum. We retrieve the corresponding density function $g$. The corresponding Herglotz function is shown in Figure \ref{fig:drop_recon} (a) and it has zero values at $(-0.08,0)$ shown in Figure \ref{fig:drop_recon} (b).
	\begin{figure}[t]
		\centering
		\begin{subfigure}[b]{0.48\textwidth}
			\includegraphics[width=\textwidth]{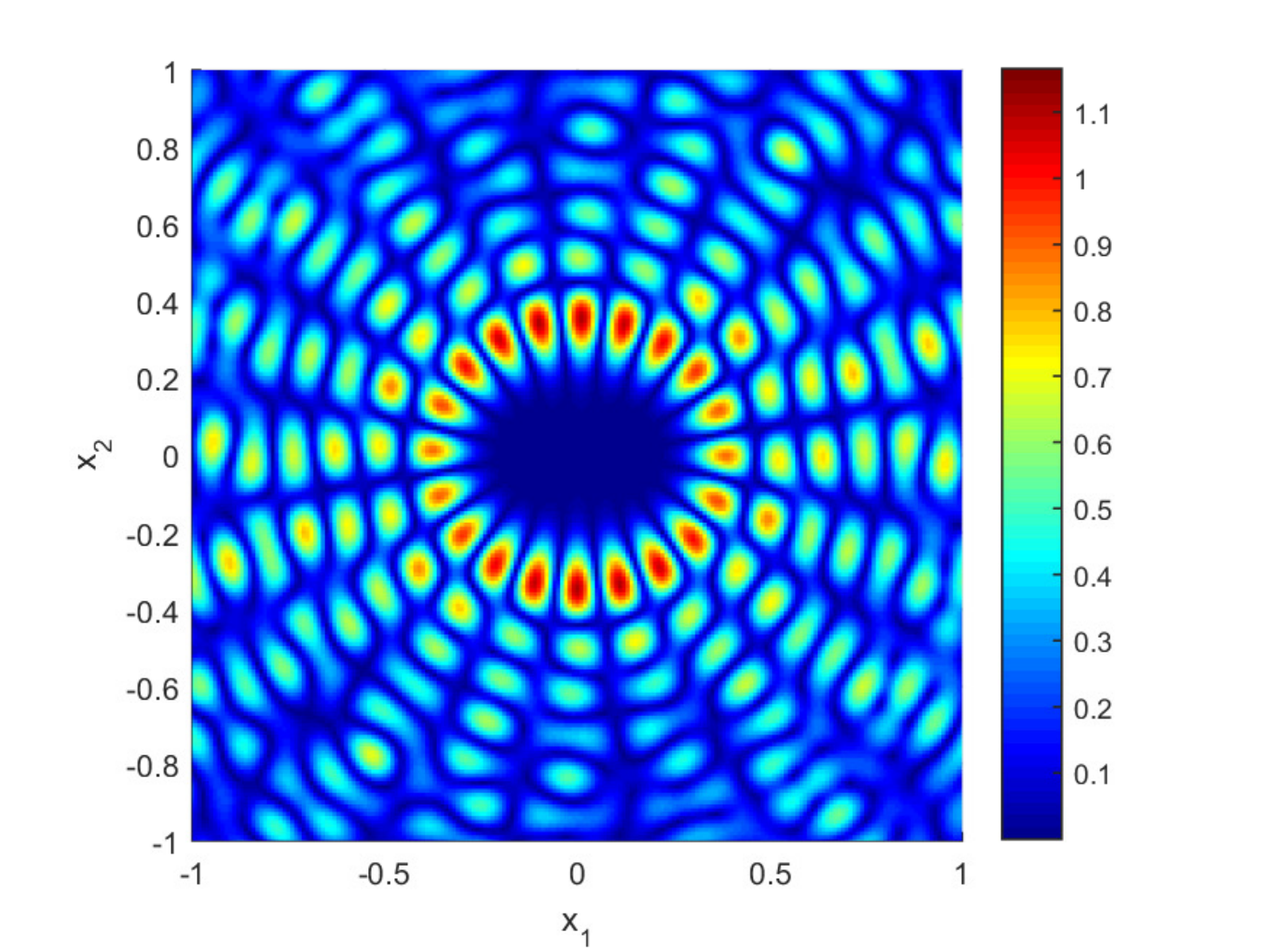}
			\caption{}
		\end{subfigure}
		\hfill
		\begin{subfigure}[b]{0.48\textwidth}
			\centering
			\includegraphics[width=\textwidth]{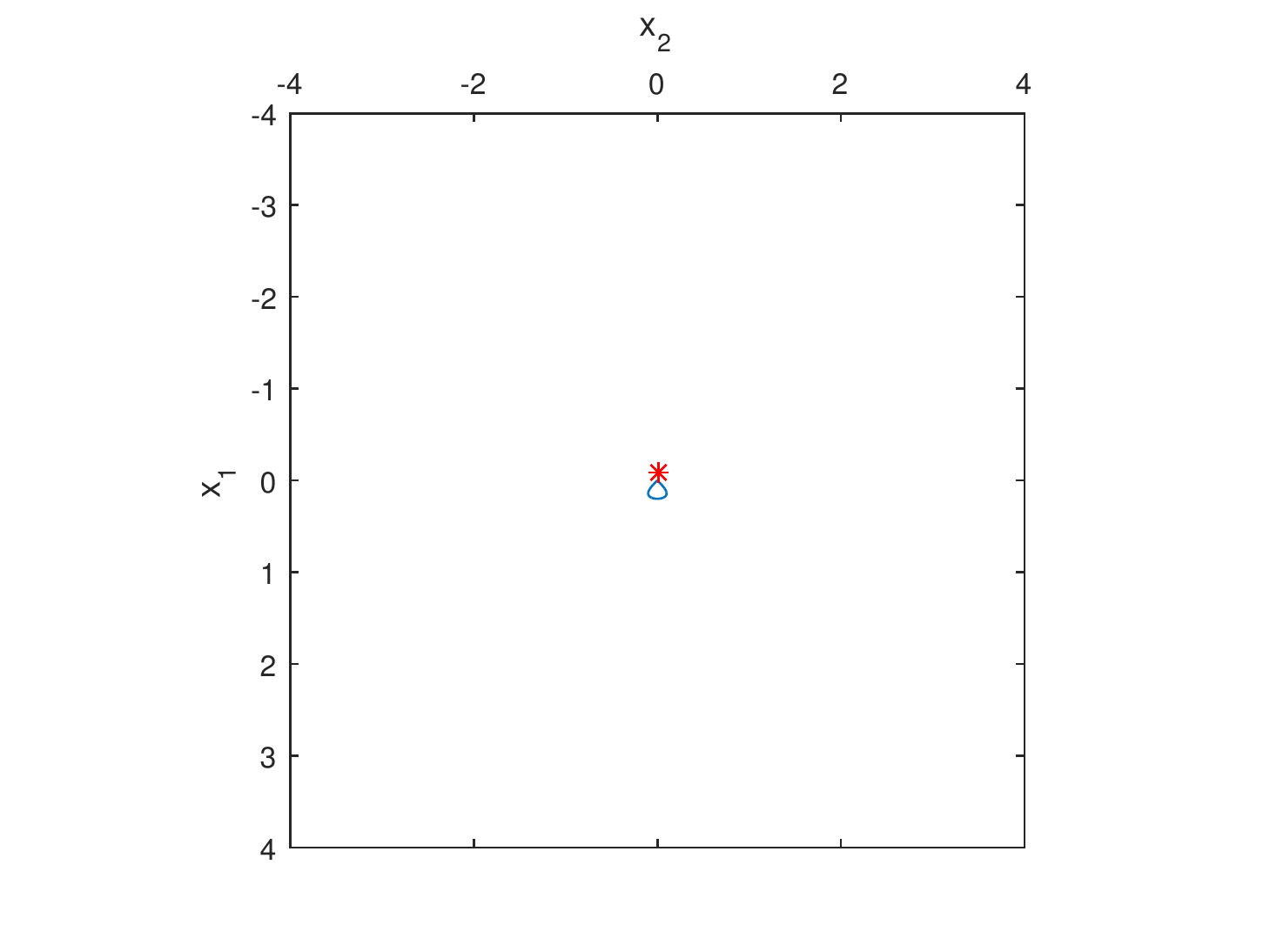}
			\caption{}
		\end{subfigure}
		\caption{(a) The Herglotz wave function approximation of interior transmission eigenfunction. (b) The locations of zero values of Herglotz wave function.}
		\label{fig:drop_recon}
	\end{figure}
	The result shows that our reconstruction location is very close to the real corner location $(0,0)$.
	
	
	\subsubsection{Example: rain drop of regular size}
	The true scatter $D$ is of rain drop shape with boundary $\partial D$ illustrated in Figure \ref{fig:rain_shape},
	\begin{figure}[t]
		\centering
		\includegraphics[width=0.45\textwidth]{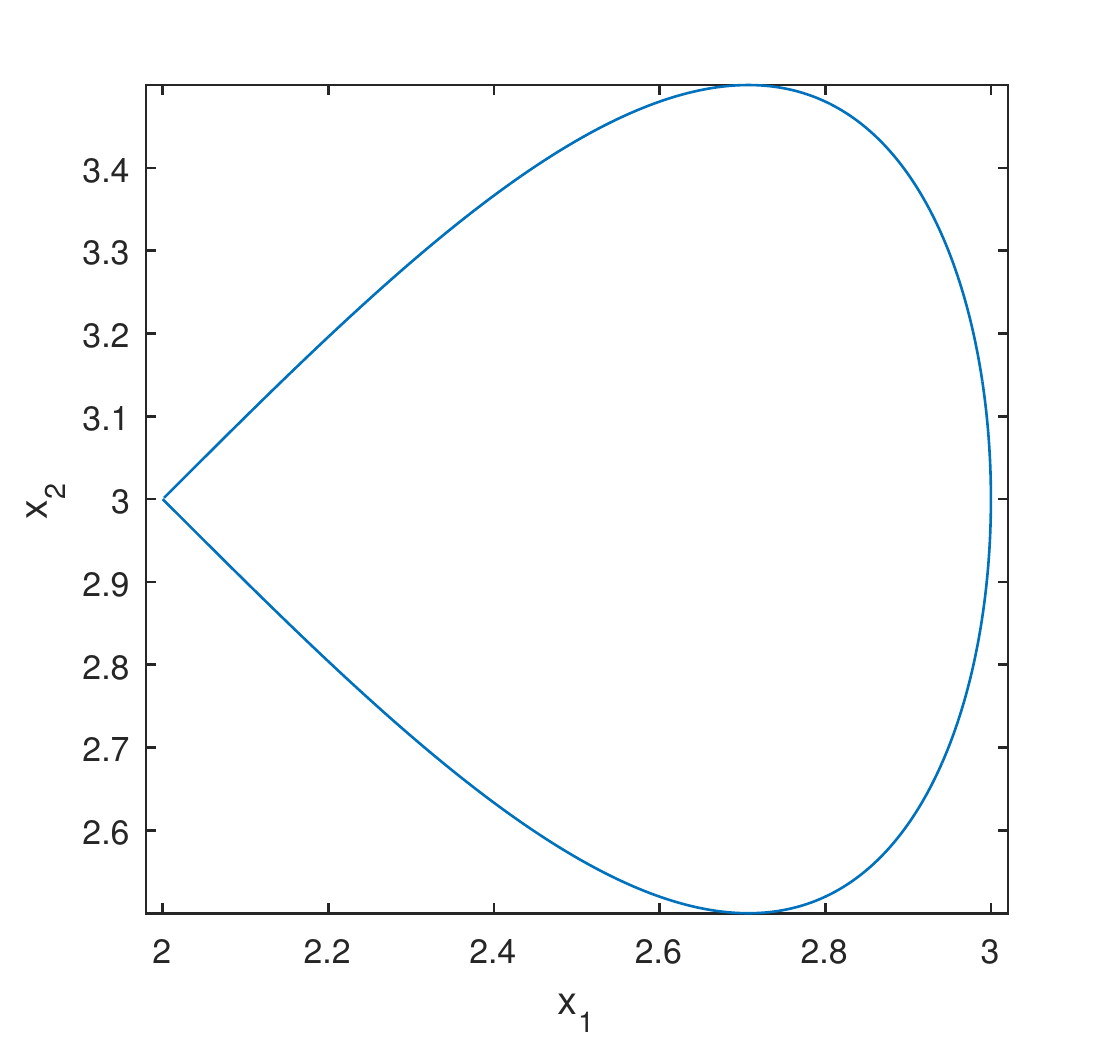}
		\caption{Rain shaped domain}
		\label{fig:rain_shape}
	\end{figure}
	and described by the parametric representation
	$$x(t)=(\sin\frac{t}{2},-\frac{1}{2}\sin t),~0\leq t\leq 2\pi.$$
	The location of the corner is $(2,3)$. The refractive index of $D$ is constant $n=4$ which is not known in a prior.  From the estimation \eqref{esti_geqn}, we define the searching frequency region to be $(5,7)$ and the searching step is $h=0.01$. We find that when $k=6.43$, the minimization problem \eqref{min} achieve its minimum. We retrieve the corresponding density function $g$. The corresponding Herglotz function is shown in Figure \ref{fig:drop_regular_recon} (a) and it has zero values at $(2.32,3.04)$ shown in Figure \ref{fig:drop_regular_recon} (b).
	\begin{figure}[t]
		\centering
		\begin{subfigure}[b]{0.48\textwidth}
			\includegraphics[width=\textwidth]{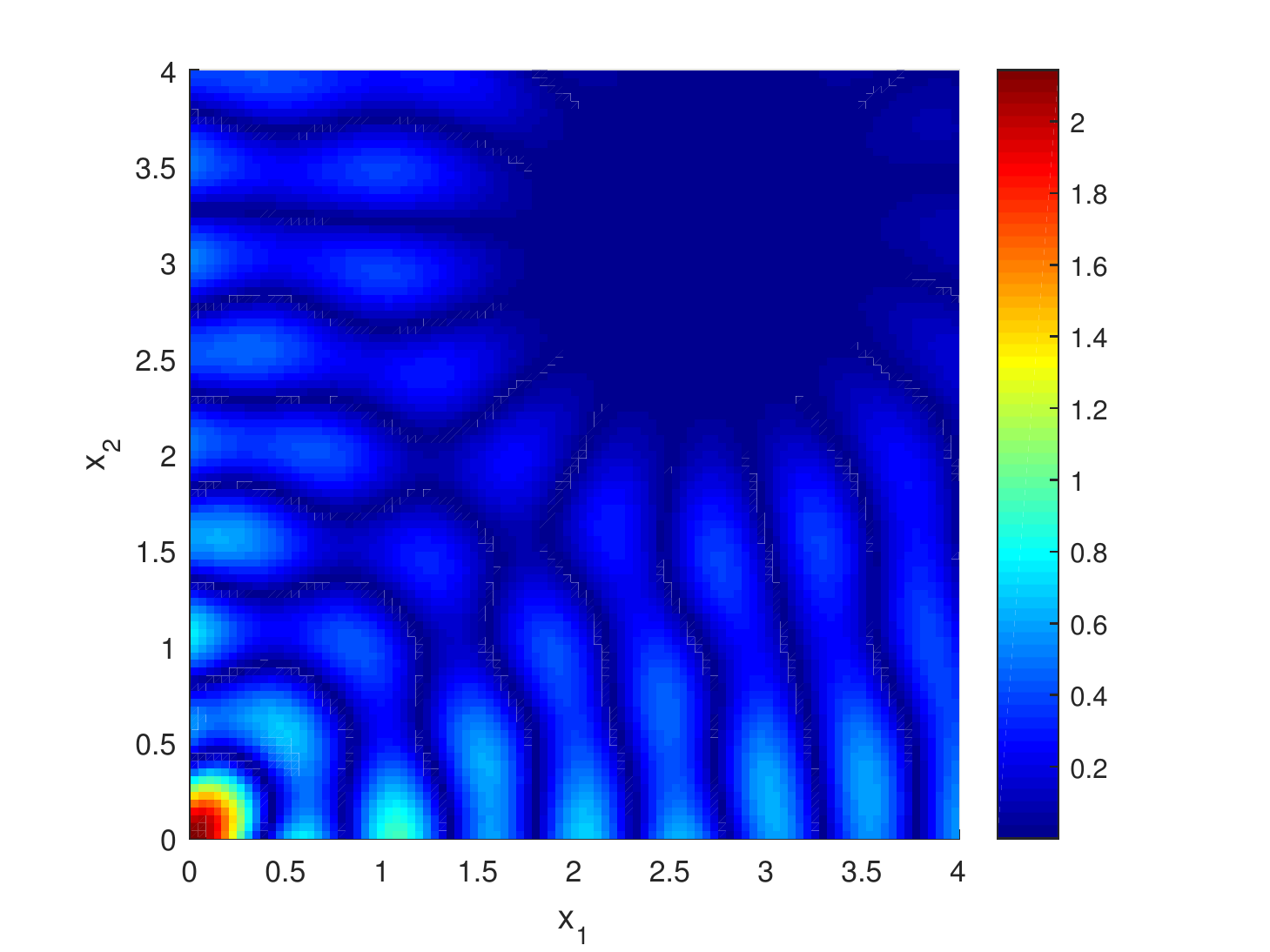}
			\caption{}
		\end{subfigure}
		\hfill
		\begin{subfigure}[b]{0.48\textwidth}
			\centering
			\includegraphics[width=\textwidth]{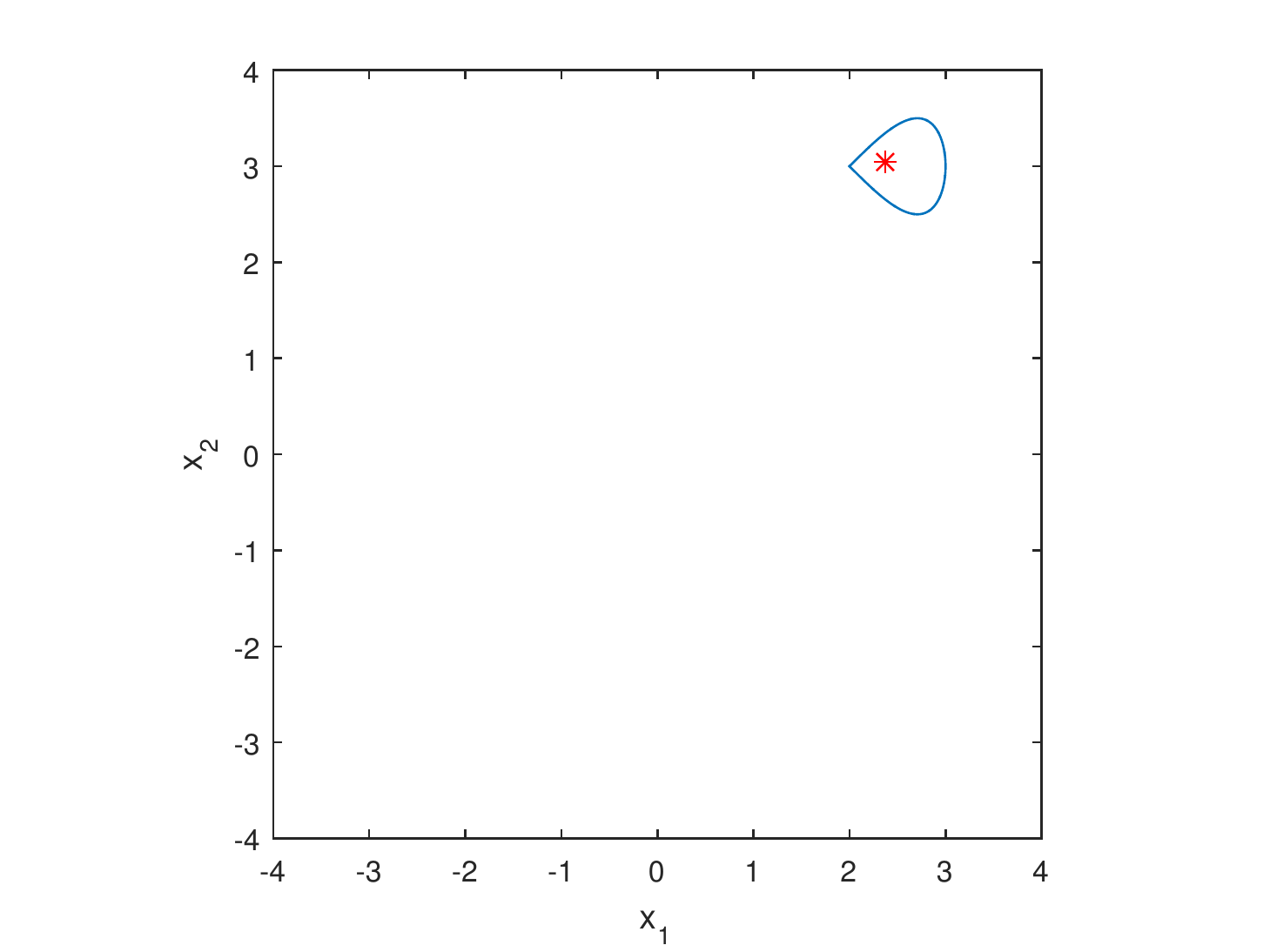}
			\caption{}
		\end{subfigure}
		\caption{(a) The Herglotz wave function approximation of interior transmission eigenfunction. (b) The locations of zero values of Herglotz wave function.}
		\label{fig:drop_regular_recon}
	\end{figure}
	
	The result shows that our reconstruction location is very close to the real corner location $(2,3)$ even when the scatter is not small.

	\subsubsection{Example: Heart shape}
	Let $D$ be a heart shape domain with boundary $\partial D$ illustrated in Figure \ref{fig:heart},
	\begin{figure}[t]
		\centering
		\includegraphics[width=0.45\textwidth]{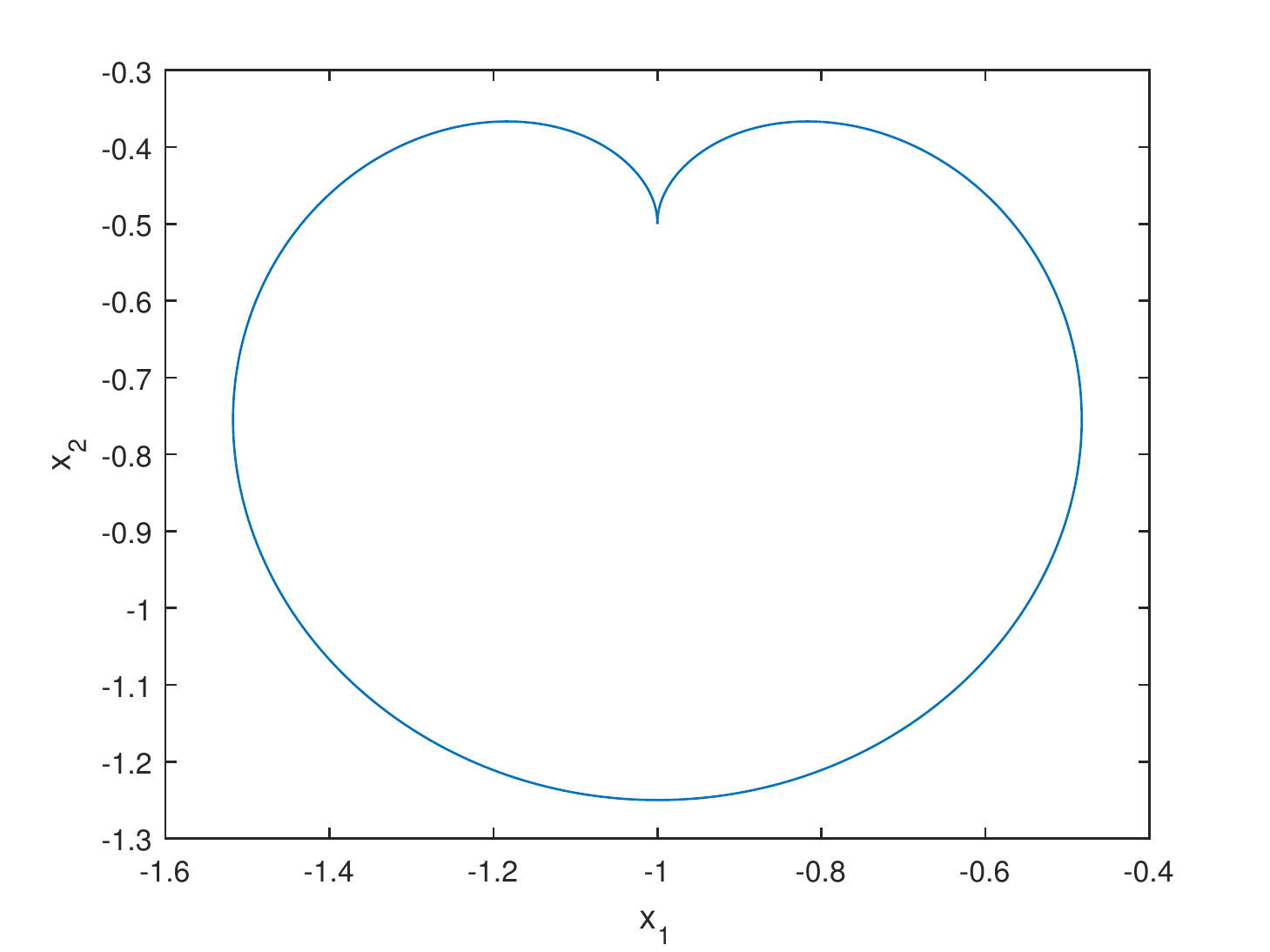}
		\caption{heart shaped domain}
		\label{fig:heart}
	\end{figure}
	and described by the parametric representation
	$$x(t)=((1-\cos t)(1.5\sin t-0.5\sin 2t)/4-1,(1-\cos t)(\cos t-0.5\cos 2t)/4-0.5),$$
	where $0\leq t\leq 2\pi$. The location of the corner is $(-1,-0.5)$. The refractive index of $D$ is constant $n=16$ which is not known in a prior.  From the prior estimate \eqref{esti_geqn}, we define the searching frequency region to be $(2,3)$ and the searching step is $h=0.01$. We find that when $k=2.14$, the minimization problem \eqref{min} achieve its minimum. We retrieve the corresponding density function $g$. The corresponding Herglotz wave function is shown in Figure \ref{fig:heart_recon} (a).
	
	From Figure \ref{fig:heart_recon} (b) we can find the localizing point of Herglotz wave function at $(-1,-0.52)$. But this localizing point is a local maximum. In order to locate it accurately, we need to narrow the region of Herglotz wave function around the cusp singularity.
	\begin{figure}[t]
		\centering
		\begin{subfigure}[b]{0.48\textwidth}
			\includegraphics[width=\textwidth]{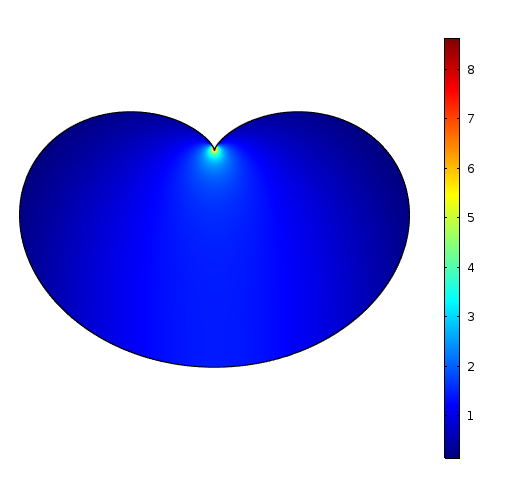}
			\caption{}
		\end{subfigure}
		\hfill
		\begin{subfigure}[b]{0.48\textwidth}
			\centering
			\includegraphics[width=\textwidth]{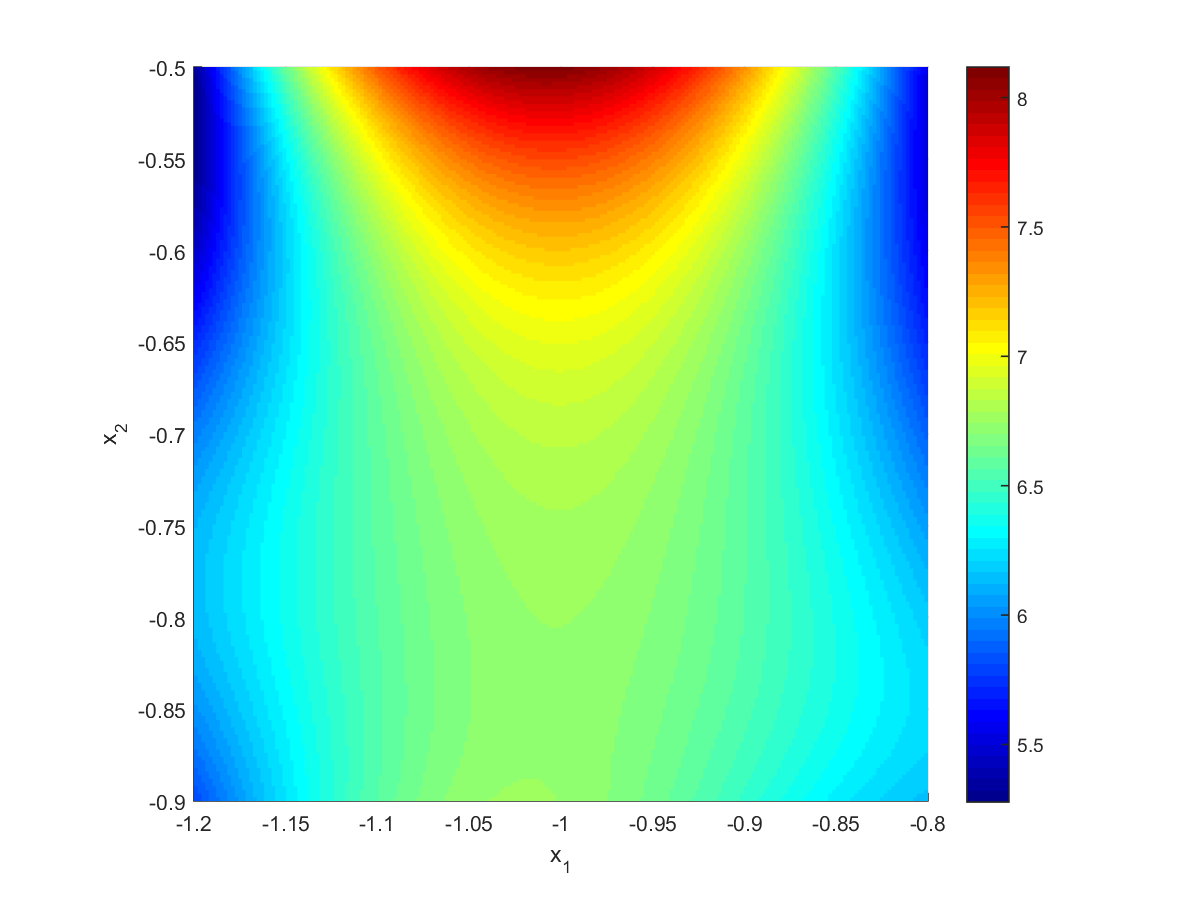}
			\caption{}
		\end{subfigure}
		\caption{(a) Transmission eigenfunction. (b) The Herglotz wave function approximation of interior transmission eigenfunction.}
		\label{fig:heart_recon}
	\end{figure}

	\subsection{Recovery of polyhedrons}
	
	\subsubsection{Example: Square}
	The true scatter $D$ is a square centered at origin of edge length $2$. The refractive index of the square is $n=16$. These are not known in a priori. In this example, we aim to reconstruct the shape of square by locating the four corners of it. In fact, the first three real transmission eigenvalues are computed to be $0.9398, 1.2221, 1.2221$ with corresponding eigenfunctions shown in Figure \ref{fig:square}.
	\begin{figure}[t]
		\centering
		\begin{subfigure}[b]{0.3\textwidth}
			\includegraphics[width=\textwidth]{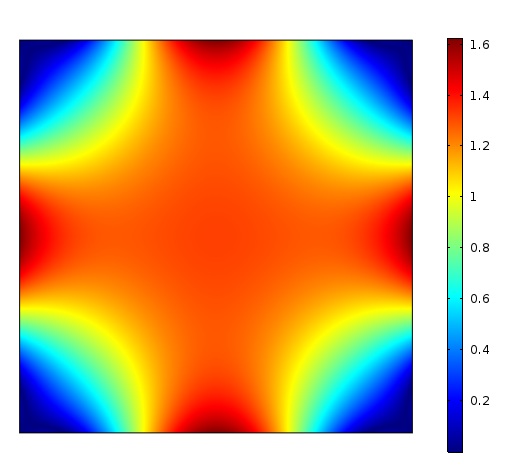}
			\caption{}
		\end{subfigure}
		\hfill
		\begin{subfigure}[b]{0.3\textwidth}
			\includegraphics[width=\textwidth]{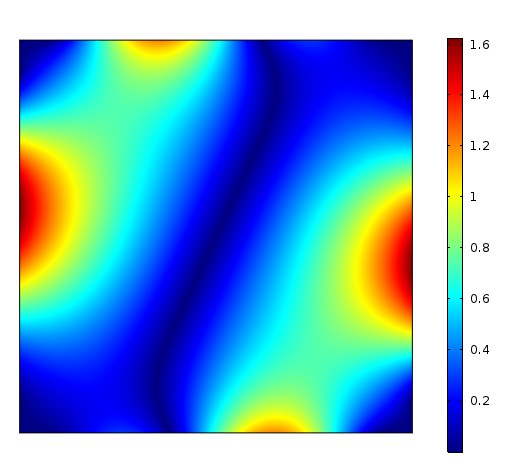}
			\caption{}
		\end{subfigure}
		\hfill
		\begin{subfigure}[b]{0.3\textwidth}
			\includegraphics[width=\textwidth]{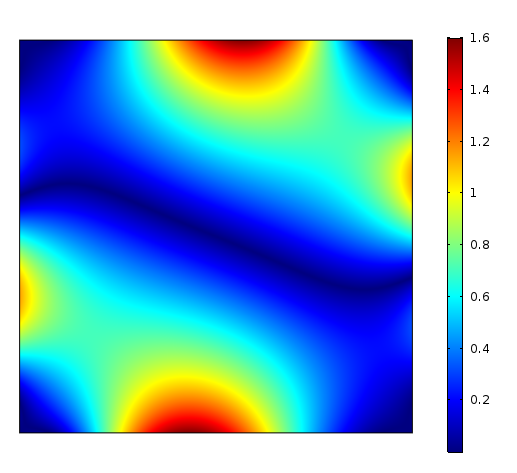}
			\caption{}
		\end{subfigure}
		\caption{The magnitude of transmission eigenfunctions for the square domain with potential $n=16$ of different eigenvalues. (a) $|v|:k_1$; (b) $|v|:k_2$; (c) $|v|:k_2$.}
		\label{fig:square}
	\end{figure}
	
	The transmission eigenvalues are not known in advance. We suppose that $D$ is a convex polygon and the refractive index of $D$ is around $16$. By the estimation \eqref{esti_geqn}, we search the frequency region $(0.5,2)$. It turns out that when $k=0.94$, the minimization problem \eqref{min} achieve its minimum value zero. We also retrieve the corresponding density function $g$.
	
	The approximated Herglotz wave function for transmission eigenfunction is shown in Figure \ref{fig:square_k1} (a). The Herglotz wave function is the extension of the transmission eigenfunction shown in Figure \ref{fig:square} (a). We only display the Herglotz wave function in region $[-1.1,1.1]\times [-1.1,1.1]$ for convenience of comparison between \ref{fig:square} (a) and \ref{fig:square_k1} (a). We find the zero values of the Herglotz wave function in the region $[-4,4]\times [-4,4]$ shown in Figure \ref{fig:square_k1} (b).
	\begin{figure}[t]
		\centering
		\begin{subfigure}[b]{0.48\textwidth}
			\includegraphics[width=\textwidth]{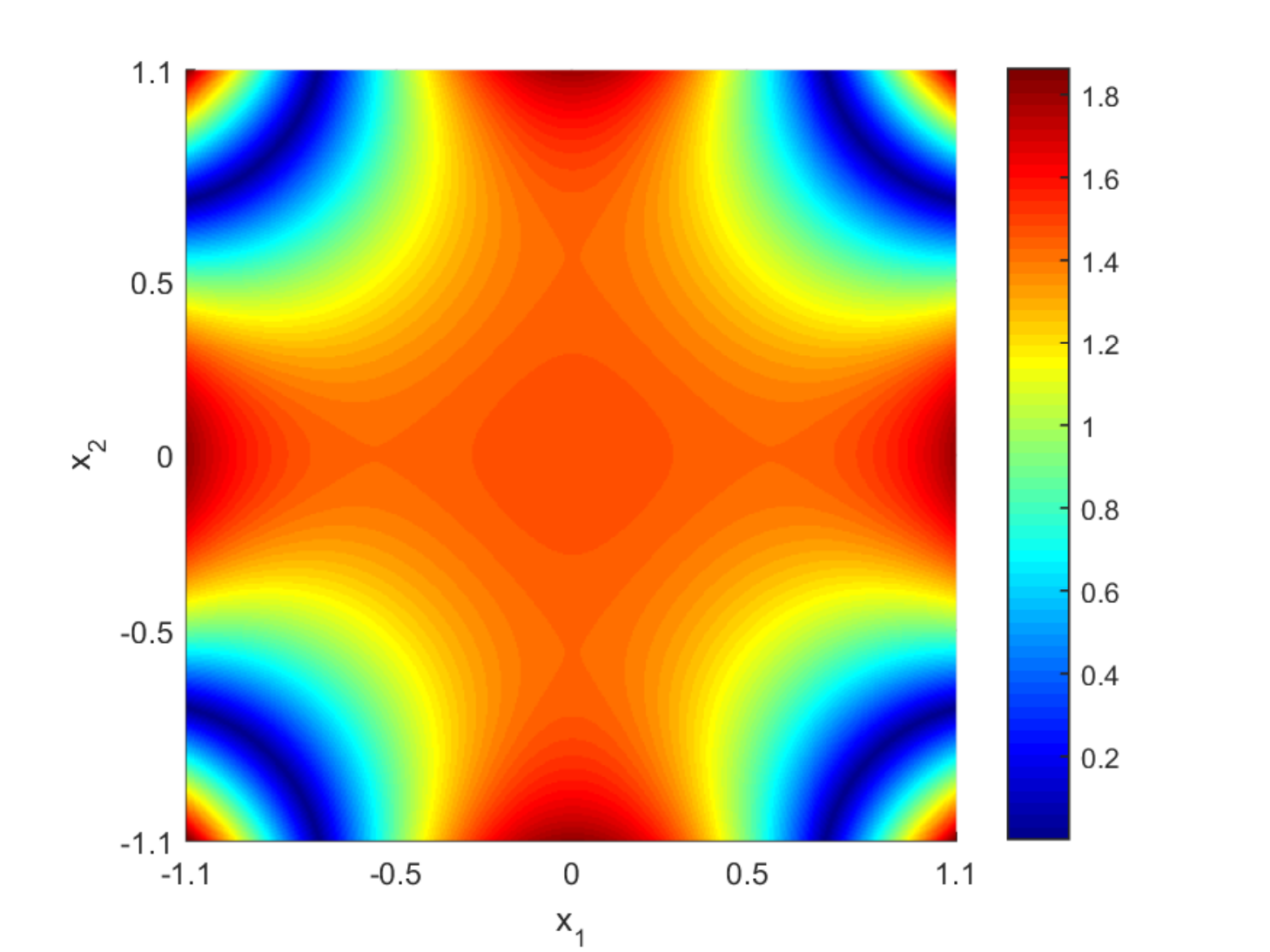}
			\caption{}
		\end{subfigure}
		\hfill
		\begin{subfigure}[b]{0.48\textwidth}
			\includegraphics[width=\textwidth]{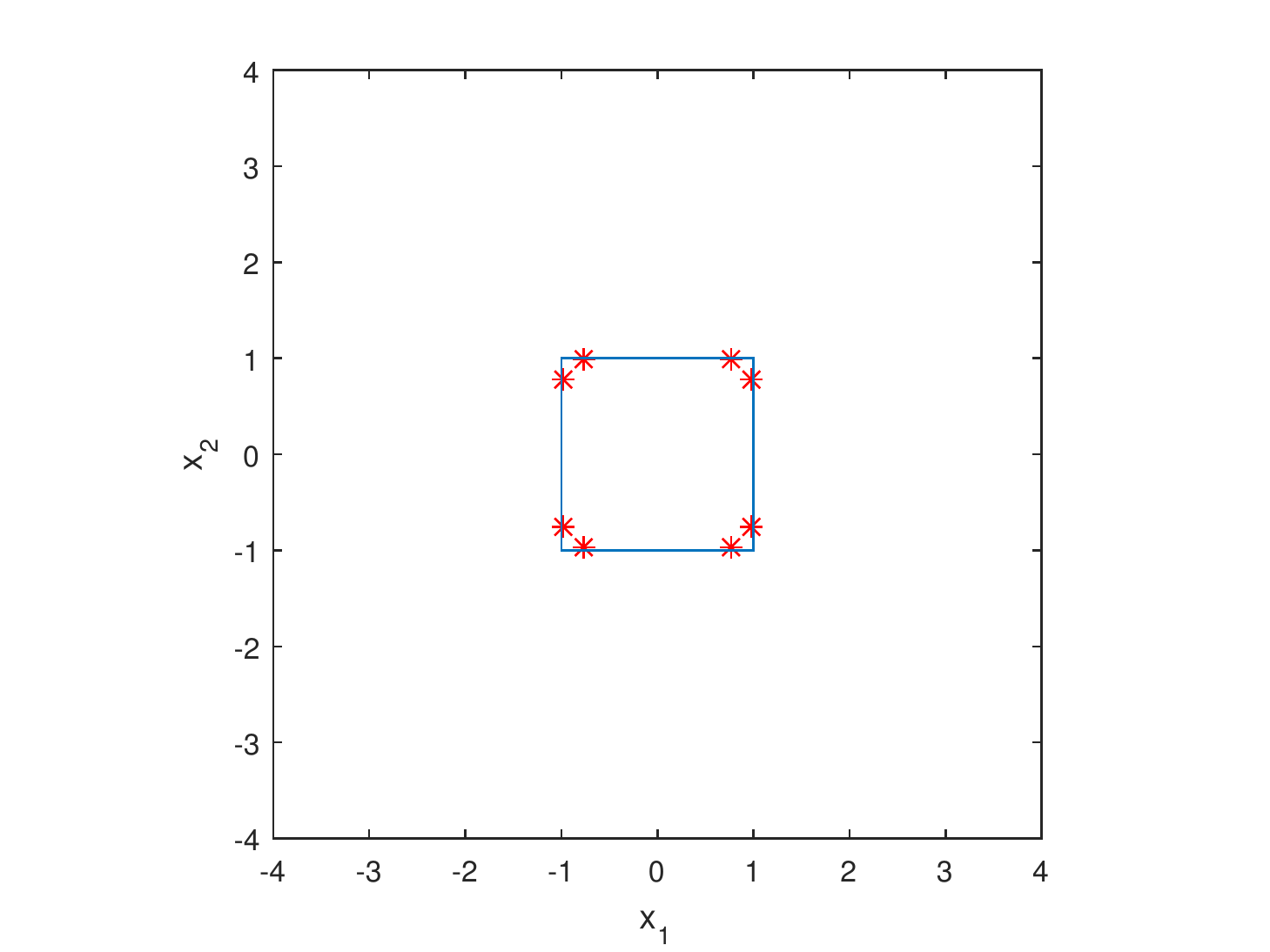}
			\caption{}
		\end{subfigure}
		\caption{(a) The Herglotz wave function approximation of interior transmission eigenfunction. (b) The locations of zero values of Herglotz function.}
		\label{fig:square_k1}
	\end{figure}
	
	In Figure \ref{fig:square_k1} (b), there are two cluster points around each corner, we take average of each cluster points as the reconstruction corner point. The reconstruction of unknown polygon is shown in Figure \ref{fig:square_reco}. The blue dotted square is the real scatter, the red dotted one is our reconstructed result.
	\begin{figure}[t]
		\centering
		\includegraphics[width=0.5\textwidth]{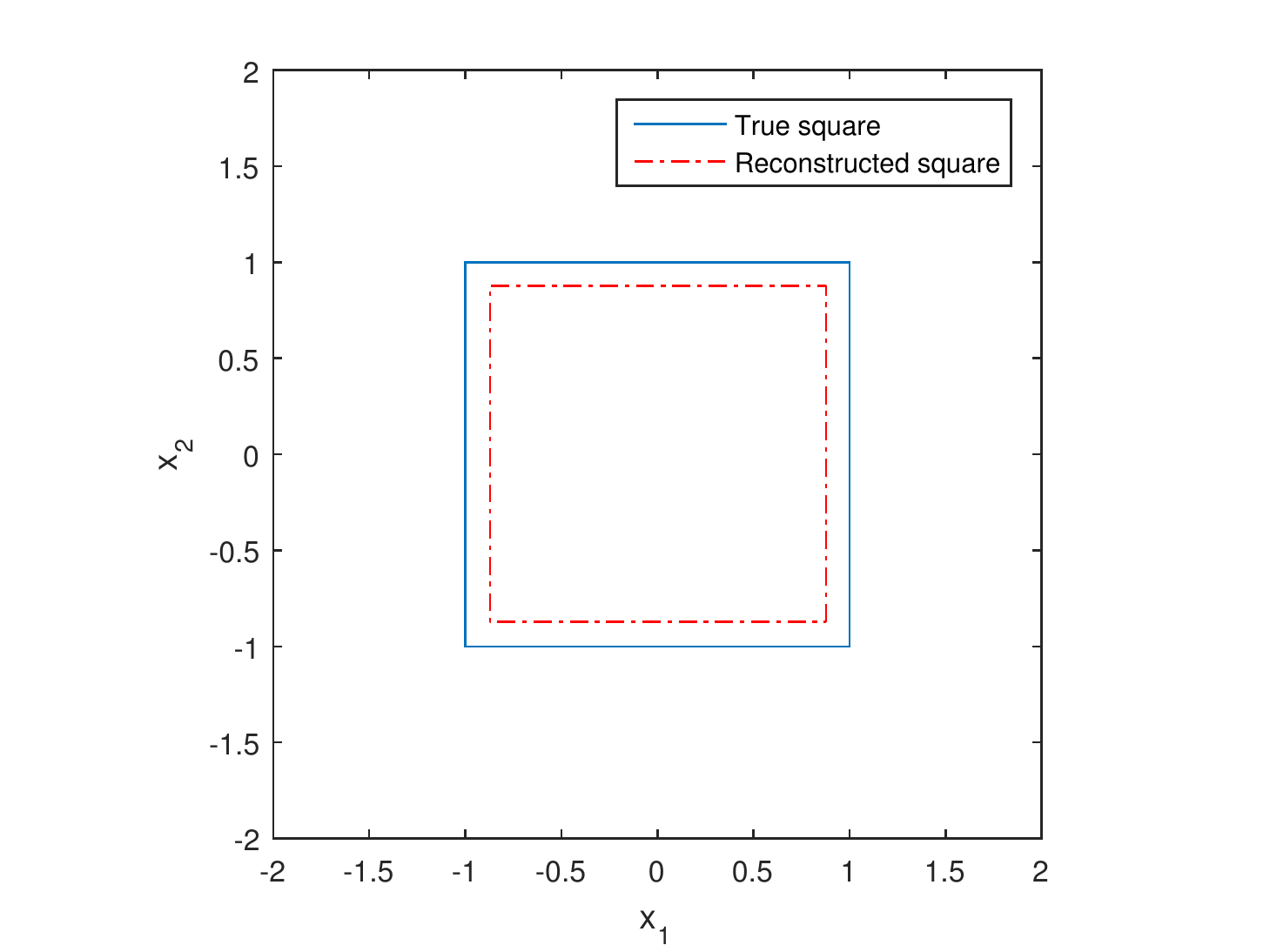}
		\caption{Reconstruction of square}
		\label{fig:square_reco}
	\end{figure}
	
	Note that the wavelength of the probing incident wave is $2\pi/k=6.68$, the length of the square is $2$. The size of the scatter is less than half of the wavelength. Surprisingly, our method turns out to have the super-resolution effect.
	
	\subsubsection{Example: Hexagon}
	The true scatter $D$ is a hexagon centered at origin of edge length $2$. The refractive index of the square is $n=25$. These are not known in a priori. In this example, we aim to locate the six corners of the hexagon by locating the corners of it. In fact, the first three real transmission eigenvalues are computed to be $0.4392, 0.5809, 0.5809$ with corresponding eigenfunctions shown in Figure \ref{fig:hexagon}.
	\begin{figure}[t]
		\centering
		\begin{subfigure}[b]{0.32\textwidth}
			\includegraphics[width=\textwidth]{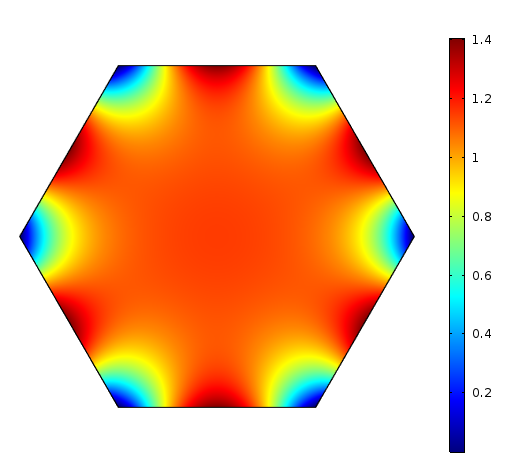}
			\caption{}
		\end{subfigure}
		\hfill
		\begin{subfigure}[b]{0.32\textwidth}
			\includegraphics[width=\textwidth]{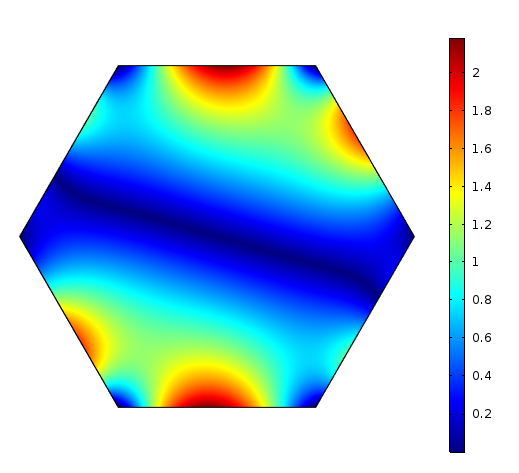}
			\caption{}
		\end{subfigure}
		\hfill
		\begin{subfigure}[b]{0.32\textwidth}
			\includegraphics[width=\textwidth]{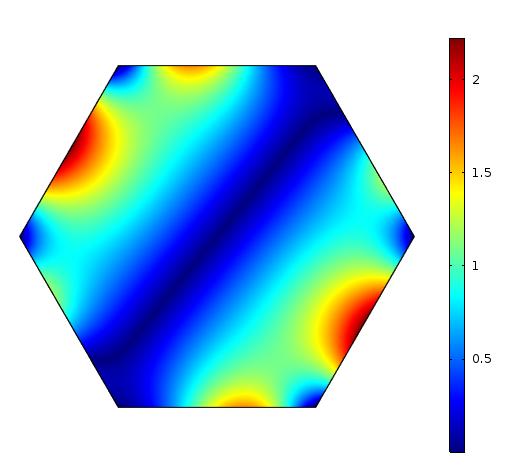}
			\caption{}
		\end{subfigure}
		\caption{The magnitude of transmission eigenfunctions for the hexagon domain with potential $n=25$ of different eigenvalues. (a) $|v|:k_1$; (b) $|v|:k_2$; (c) $|v|:k_2$.}
		\label{fig:hexagon}
	\end{figure}
	
	The transmission eigenvalues are not known in advance. In this example, we aim to reconstruct the support of $D$ under the assumption that $D$ is a convex polygon and the refractive index of $D$ is around $25$. By the estimate \eqref{esti_geqn}, we search the frequency region $(0,1)$. It turns out that when $k=0.44$, the minimization problem \eqref{min} achieve its minimum value $0$. We retrieve the corresponding density function $g$.
	
	The corresponding Herglotz wave function is shown in Figure \ref{fig:hexa_k1} (a) (extension of transmission eigenfunction shown in Figure \ref{fig:hexagon} (a)). For convenience of comparison between Figure \ref{fig:hexagon} (a) and Figure \ref{fig:hexa_k1} (a)), we only display the Herglotz wave function in region $[-2.1,2.1]\times [-2.1,2.1]$. We find the zero values of the Herglotz wave function in the region $[-4,4]\times [-4,4]$ shown in Figure \ref{fig:hexa_k1} (b).
	\begin{figure}[t]
		\centering
		\begin{subfigure}[b]{0.45\textwidth}
			\includegraphics[width=\textwidth]{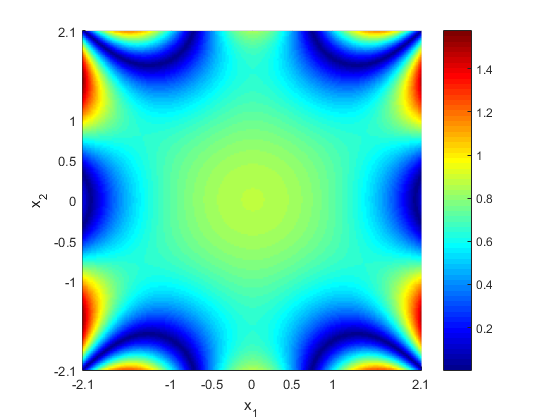}
			\caption{}
		\end{subfigure}
		\hfill
		\begin{subfigure}[b]{0.45\textwidth}
			\includegraphics[width=\textwidth]{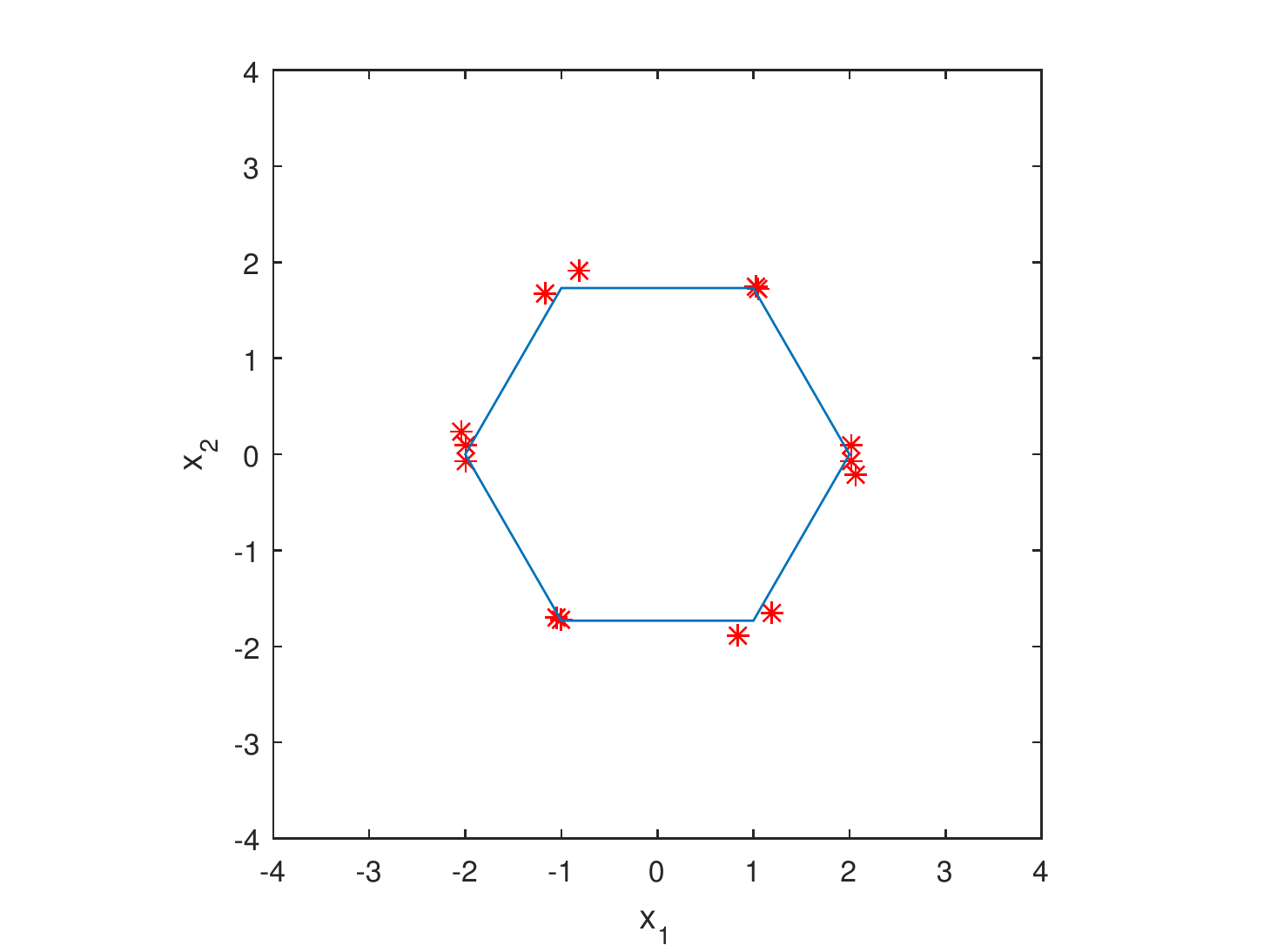}
			\caption{}
		\end{subfigure}
		\caption{(a) The Herglotz wave function approximation of interior transmission eigenfunction. (b) The locations of zero values of Herglotz function.}
		\label{fig:hexa_k1}
	\end{figure}
	We calculate the average of each cluster points. The reconstruction of unknown polygon is shown in Figure \ref{fig:Hexagon_reco}. The blue dotted hexagon is the real scatter, the red dotted one is our reconstructed result. Figure \ref{fig:Hexagon_reco} shows that our method give accurate reconstruction result. The wavelength of the probing incident wave is $2\pi/k=14.27$, the size of the hexagon is $4$. The size of the scatter is less than half of the wavelength. This example also turns out to have the super-resolution effect.
	\begin{figure}[t]
		\centering
		\includegraphics[width=0.5\textwidth]{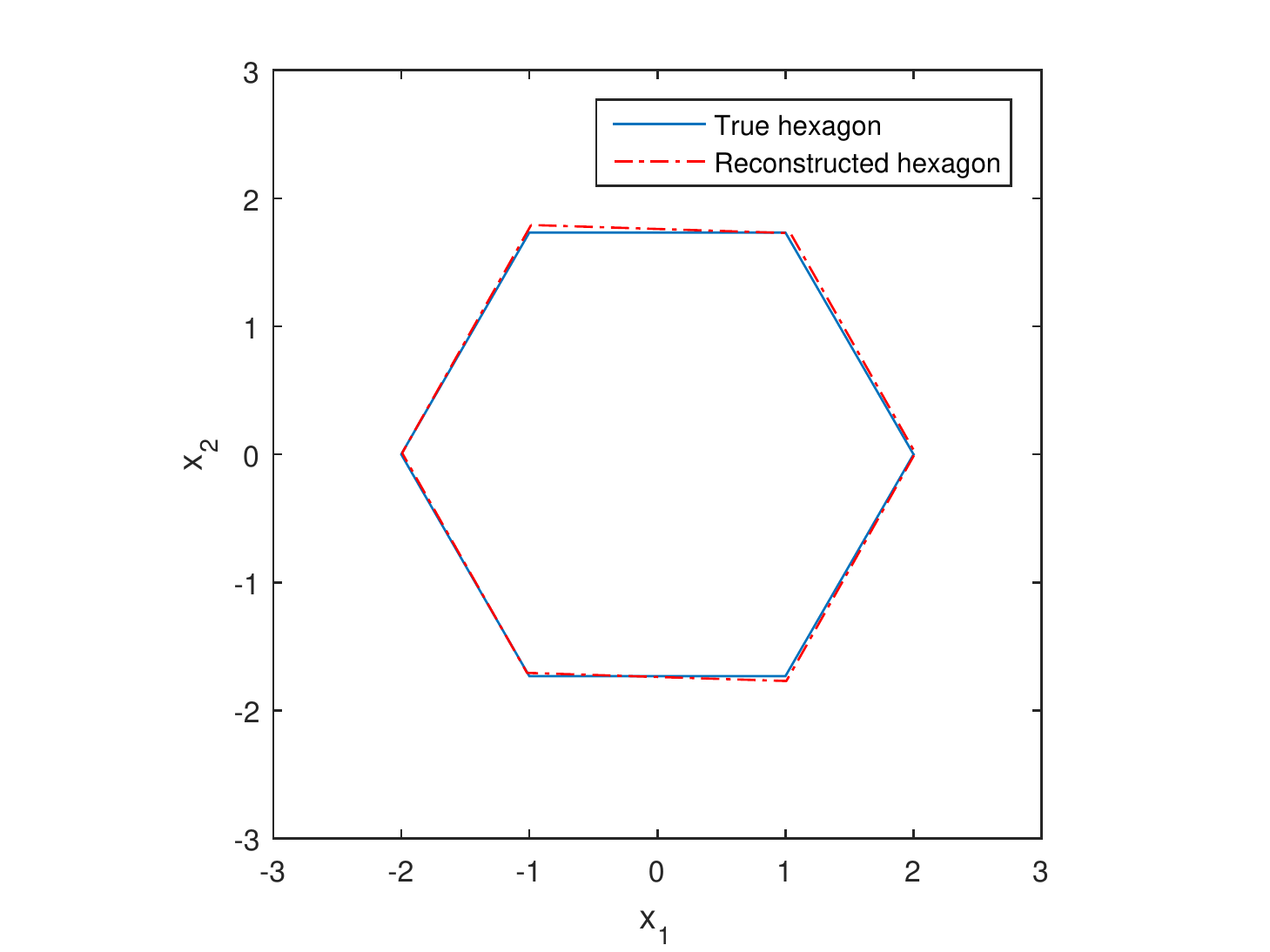}
		\caption{Reconstruction of hexagon}
		\label{fig:Hexagon_reco}
	\end{figure}

	\section{Conclusion}
	In this paper, we develop a novel inverse scattering scheme of extracting the geometric structures of an unknown/inaccessible inhomogeneous medium using the intrinsic geometric properties of the so-called interior transmission eigenfunctions if the scatter posseses some cusp singularities. The proposed reconstruction method is first to make use of the far-field data $u_\infty(\hat x, d, k )$ to determine the interior transmission eigenvalue as well as the corresponding transmission eigenfunctions. The determination of transmission eigenvalue and corresponding eigenfunctions is based on the far-field regularization techniques. To our best knowledge, the study in this aspect is new to the literature. After the determination of the transmission eigenvalue, we seek the Herglotz wave function in a certain domain which is the approximation of the corresponding transmission eigenfunction. The places where the Herglotz wave function is vanishing or localizing are the locations of those cusp singularities of the support of the medium scatterer. If further a priori information is available on the support of the medium, say, it is a convex polyhedron, we can actually recover its shape by simply joining the cusp singularities by line. Surprisingly, our method even works when the size of scatter is greater than the probing wavelength, which encourages us to extend our study to the applications of super-resolution. Our study is first of its kind in the literature and opens up a new direction in the study of inverse scattering problems. In the numerical experiments, we mainly focus on the convex domain in two dimension, we will leave the recovery of non-convex domains as well as the three dimensional numerical experiments in a forthcoming paper.

\section*{acknowledgement}
	The work of Jingzhi Li was supported by the NSF of China (No.~11571161) and the Shenzhen Sci-Tech (No.~JCYJ20160530184212170). The work of Hongyu Liu was supported by the startup fund and the FRG grants from Hong Kong Baptist University, and the Hong Kong RGC grant (No.~12302415).

\end{document}